\DeclareMathOperator{\Hom}{Hom}
\DeclareMathOperator{\Der}{Der}
\DeclareMathOperator{\Alg}{Alg}
\DeclareMathOperator{\Id}{Id}
\DeclareMathOperator{\Mod}{Mod}
\DeclareMathOperator{\Span}{Span}
\DeclareMathOperator{\Ima}{Im}
\DeclareMathOperator{\ev}{ev}
\titleformat*{\section}{\bf \large}
\providecommand{\keywords}[1]{\textbf{Keywords:} #1}
\providecommand{\msc}[1]{\textbf{Mathematics Subject Classification (2010):} #1}
\theoremstyle{plain}
\newtheorem{theorem}{Theorem}[section]
\newtheorem{lemma}[theorem]{Lemma}
\newtheorem{prop}[theorem]{Proposition}
\newtheorem{corollary}[theorem]{Corollary}
\theoremstyle{definition}
\newtheorem*{definition*}{Definition}
\newcommand{\sslash}{\mathbin{/\mkern-6mu/}}
\newcommand{\comment}[1]{\sslash #1 \sslash}
\numberwithin{equation}{section} %number equation with sections
\title{An algebraic approach to the ellipticity \\ of linear differential operators}
\date{}
\author{Sławomir Kapka\thanks{Faculty of Mathematics and Computer Science, University of Lodz, ul. Banacha 22, 90-238 Łódź, Poland. \newline e-mail: \href{mailto:slawomirkapka@gmail.com}{\texttt{slawomirkapka@gmail.com}}, ORCID: \href{https://orcid.org/0000-0003-3480-7294}{\texttt{orcid.org/0000-0003-3480-7294}} }}
\begin{document}

\maketitle

\begin{abstract}
We demonstrate a method of associating the principal symbol at a $K$-point with a linear differential operator acting between modules over a commutative algebra, and we use it to define the ellipticity of a linear differential operator in a purely algebraic way. We prove that the ellipticity is preserved by a surjective homomorphism of algebras. As an example, we show that for every real affine variety there is an elliptic linear differential operator acting on the algebra of regular functions on this variety.
\end{abstract}

\noindent\keywords{elliptic operators, K-points, principal symbol, universal derivations}

\medskip
\noindent\msc{13N05, 13N15, 58J05}

\section{Introduction}

\noindent\textbf{Motivation and main results}
\medskip

Differential calculus over commutative algebras is a perfect example of the abstarction in mathematics. It is a purely algebraic theory which evolved from the theory of linear differential operators (LDOs). \cite[sec. 16.8]{Grothendieck}, \cite[sec. 2]{H-S}, \cite{Vino} are the first papers, independent of each other, which studied LDOs from commutative algebra point of view, and which came up with the very same definition of an abstract LDO. This in itself already shows the naturality of such approach. After all, A. M. Vinogradov, the author of \cite{Vino}, and his co-workers pushed this idea further and eventually developed what we now call differential calculus over commutative algebras. 

The main object of study of this theory are abstract LDOs acting between arbitrary modules over a commutative algebra. As we can see from this preview \cite{Kra} of differential calculus over commutative algebras, this theory mostly focuses on functorial properties of the whole spaces of LDOs, rather then on individual LDOs. And from the perspective of differential geometry, this algebraic theory treats LDOs as global objects neglecting the pointwise behaviour. However, from the theory of LDOs on smooth manifolds, we know that there is a way to study the character of an LDO at a single point. In fact, it is the principal symbol that allows us to determine the pointwise behaviour of an LDO acting between global sections of smooth vector bundles. It is natural to ask if this rather analytic tool generalises to a more abstract algebraic framework. In this paper we give a positive answer to this question by demonstrating a method for studying the "pointwise" behaviour of LDOs acting between modules over a commutative algebra. In a nutshell, we follow ideas from \cite[ch. 9]{Nestruev}, and with a given $K$-algebra $A$ we consider its $K$-spectrum, which contains $K$-algebra homomorphisms from $A$ to $K$. Elements of a $K$-spectrum, called $K$-points, are good algebraic analogues of points, and thus it allows us to study abstract LDOs at $K$-points.

The main aim of this paper is to show that it is possible to study pointwise behaviour of linear differential operators already at the level of differential calculus over commutative algebras (Section \ref{sec_Elliptic}). More precisely, we show that the notion of principal symbol at a point and the ellipticity of an LDO can be formulated in a purely algebraic way (Theorem \ref{thm_El_Symbol} and the definition that follows). As an example, we use this to demonstrate that elliptic linear differential operators can be considered beyond the realm of smooth manifolds. First we prove that the ellipticity is preserved by a surjective homomorphism of algebras (Theorem \ref{thm_Af_InducElip}), and then we show that for every real affine variety there is an elliptic linear differential operator acting on the algebra of regular functions on this variety (Corollary \ref{cor_Af_ellipticOnVariety}).

\bigskip
\noindent\textbf{Sections preview}
\medskip

We devote Section \ref{sec_Deriv} to develop a preliminary algebraic formalism necessary for the main results from Sections \ref{sec_Examples}, \ref{sec_Elliptic} and \ref{sec_AffVar}. We introduce the notion of $\phi$-derivation, being here just a slightly "twisted" derivation, and we study its basic functorial properties. We focus on universal $\phi$-derivations, which we call $\phi$-differentials, and which are analogous to Kähler differentials from commutative algebra. 

In Section \ref{sec_Models} we present constructions of three notably different models of the universal $\phi$-derivation, and in Section \ref{sec_Examples} we demonstrate that these models appear naturally in differential calculus over commutative algebras and in both differential and algebraic geometry. More specifically, we show that for any $K$-algebra $A$ and any $K$-point $h$, the map $d_h:A\to T^*_h A$ is a $h$-differential (Proposition \ref{prop_Ex_Alg}). As an example, we demonstrate that the pair consisting of the Zarisiki cotangent space at a point and the differential at a point, defined on an affine variety, forms a universal $\phi$-derivation for $\phi$ being an evaluation at a point (Proposition \ref{prop_Ex_AG}). Similarly, we show that the pair consisting of the cotangent space at a point and the ordinary differential at a point, defined on a smooth manifold, is a universal $\phi$-derivation for $\phi$ being an evaluation at a point (Proposition \ref{prop_Ex_DG}).

We begin Section \ref{sec_Elliptic} with a brief comparison of the algebraic approach to linear differential operators with the classical analytic one. We focus on the fact that the former approach generalises the latter. Next, we have a look at the principal symbol of LDOs acting between sections of smooth vector bundles and we investigate possible ways to generalise it to LDOs acting between arbitrary modules. Eventually, we develop auxiliary Lemmas \ref{lem_El_Commute}, \ref{lem_El_TensorLike} and present our main Theorem \ref{thm_El_Symbol}, which states that there exists an algebraic analogue of the principal symbol at a point. As a corollary, we demonstrate how to define the ellipticity of a linear differential operator in a purely algebraic way.

In Section \ref{sec_AffVar} we study the behaviour of the principal symbol at a point of LDOs acting on algebras of regular functions on affine varieties. Given an affine variety, we have a natural surjective homomorphism from the algebra of polynomials to the algebra of regular functions on this variety. Thus we study the principal symbol of LDOs induced by algebra homomorphisms (Lemmas \ref{lem_Af_SymCom} and \ref{lem_Af_Symbol}), and we prove that ellipticity is preserved by surjective homomorphisms of algebras (Theroem \ref{thm_Af_InducElip}). Finally, we demonstrate that for every real affine variety there is an elliptic linear differential operator acting on the algebra of regular functions on this variety (Corollary \ref{cor_Af_ellipticOnVariety}).

We add Appendix \ref{app_Kähler} to clarify the relation between $\phi$-differentials and Kähler differentials. We show how to modify Kähler differentials to obtain $\phi$-differentials (Proposition \ref{prop_Apppen}).

\section{Universal $\phi$-derivations}
\label{sec_Deriv} 

The notion of derivation has already been established firmly in commutative algebra and algebraic geometry (see \cite[ch. 16]{Eisenbud} for example). A derivation is just any linear map from a commutative algebra to a module which satisfies Leibniz's rule. Furthermore, in \cite[p. 148]{Greub} we have the notion of $\phi$-derivations which satisfy a slightly "twisted" Leibniz's rule and which take values in algebras. We begin by merging these two concepts to obtain $\phi$-derivations which take values in modules.

Assume that $K$ is a fixed commutative and unitial ring. We constrain ourselves to commutative, unitial, associative $K$-algebras and unitial $K$-algebra homomorphisms. Let $A,B$ be two $K$-algebras and let $\phi:A\to B$ be a $K$-algebra homomorphism. Henceforth, this algebraic framework will be silently assumed unless we specify otherwise. 

\begin{definition*}
Let $M$ be a $B$-module. A $K$-linear map $D:A\to M$ will be called a \emph{$\phi$-derivation with values in $M$} (or just \emph{$\phi$-derivation}) if for every $f,g\in A$
\begin{equation}
D(fg)=\phi(f)D(g)+\phi(g)D(f).
\end{equation}
The set of all $\phi$-derivations with values in $M$ has a natural $B$-module structure, and we will denote it by $\Der_K(\phi,M)$.
\end{definition*}

$\phi$-derivations behave well when composed with $B$-linear maps. More specifically, any $B$-linear map composed with a $\phi$-derivation is again a $\phi$-derivation.

\begin{prop}
Let $M,M'$ be two $B$-modules and let $G:M\to M'$ be a $B$-module homomorphism. If $D:A\to M$ is a $\phi$-derivation, then $G\circ D:A\to M'$ is also a $\phi$-derivation.
\end{prop}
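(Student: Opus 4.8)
The plan is to verify directly that $G \circ D$ satisfies the two defining conditions of a $\phi$-derivation, namely $K$-linearity and the twisted Leibniz rule. Both will follow immediately from the corresponding properties of $D$ together with the $B$-linearity of $G$, so I expect no genuine obstacle here. The only point worth isolating is that the full strength of $B$-linearity of $G$ (rather than mere $K$-linearity) is what allows the factors $\phi(f),\phi(g)\in B$ to be moved across $G$; this is the single place where the hypothesis on $G$ is actually used.

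First I would check $K$-linearity. Since $G$ is a $B$-module homomorphism and $B$ is a $K$-algebra, $G$ is in particular $K$-linear, because the $K$-action on any $B$-module factors through the unit $K\to B$. As $D$ is $K$-linear by assumption, the composite $G\circ D$ of two $K$-linear maps is again $K$-linear.

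Next I would verify the Leibniz identity. Fixing $f,g\in A$, applying $G$ to the defining relation for $D$, and using additivity together with $B$-linearity of $G$, I compute
\begin{equation}
(G\circ D)(fg) = G\bigl(\phi(f)D(g)+\phi(g)D(f)\bigr) = \phi(f)\,G(D(g))+\phi(g)\,G(D(f)),
\end{equation}
which is precisely $\phi(f)(G\circ D)(g)+\phi(g)(G\circ D)(f)$. Hence $G\circ D$ is a $\phi$-derivation with values in $M'$. The same computation in fact shows, more structurally, that post-composition with $G$ induces a $B$-linear map $\Der_K(\phi,M)\to\Der_K(\phi,M')$, although only the statement for a single $D$ is required here.
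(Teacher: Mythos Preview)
Your proof is correct and follows exactly the same route as the paper: both verify the twisted Leibniz rule by applying $G$ to $D(fg)$ and using $B$-linearity to pull out the scalars $\phi(f),\phi(g)$. You are simply a bit more explicit about the $K$-linearity step and the induced map on $\Der_K(\phi,-)$, which the paper leaves implicit.
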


\begin{proof}
Simply, for any $f,g\in A$
\begin{equation}
G(D(fg))=G(\phi(f)D(g)+\phi(g)D(f)) = \phi(f)G(D(g))+\phi(g)G(D(f)).
\end{equation}
\end{proof}

\noindent As a result, with every $B$-module homomorphism $G:M\to M'$ we can associate the $B$-linear map $\Der_K(\phi,G):\Der_K(\phi,M)\to\Der_K(\phi,M')$ defined for any $D\in\Der_K(\phi,M)$ by
\begin{equation}
\Der_K(\phi,G)(D)=G \circ D.
\end{equation}
This assignment gives rise to the covariant endofunctor $\Der_K(\phi,\cdot)$ in the category of $B$-modules.

\begin{prop}
If $M,M',M''$ are $B$-modules and $G:M\to M'$, $G':M'\to M''$ are $B$-module homomorphisms, then
\begin{equation}
\Der_K(\phi,G')\circ\Der_K(\phi,G) =\Der_K(\phi,G' \circ G) 
\end{equation}
and $\Der_K(\phi,\Id_M) = \Id_{\Der_K(\phi,M)}$.
\end{prop}

\begin{proof}
For any $D\in\Der(\phi,M)$ we have that
\begin{equation}
\begin{split}
\Der_K(\phi,G' \circ G) (D) &= (G'\circ G)\circ D = G' \circ (G \circ D)\\
&=\Der_K(\phi,G')(G\circ D)\\
&=\Der_K(\phi,G')( \Der_K(\phi,G)(D)) \\
&=(\Der_K(\phi,G')\circ \Der_K(\phi,G))(D).
\end{split}
\end{equation}
The second part follows directly from the definition of $\Der_K(\phi,\Id_M).$
\end{proof}

We are interested in finding a representation for the functor $\Der_K(\phi,\cdot)$. Such representations will be called universal $\phi$-derivations, and they will be the main object of our study in this section.

\begin{definition*}
A pair $(\Omega_\phi,d_\phi)$ consisting of a $B$-module $\Omega_\phi$ and a $\phi$-derivation $d_\phi:A\to \Omega_\phi$ will be called a \emph{universal $\phi$-derivation} if for any $B$-module $M$ the assignment
\begin{equation}
\label{eq_alpha}
\Hom_B(\Omega_\phi, M)\ni F \overset{\alpha_M}{\longmapsto} F\circ d_\phi \in \Der_K(\phi,M)
\end{equation} is an isomorphism of $B$-modules. Such $d_\phi$ will be called a \emph{$\phi$-differential} and such $\Omega_\phi$ will be called a \emph{module of $\phi$-differentials}.
\end{definition*}

In order to explain the word universal in the above definition, we just need to read what the bijectivity of $\alpha_M$ means. By doing this, we get the so called universal factorisation property, and as a result we obtain an equivalent description of universal $\phi$-derivations.

\begin{prop}
$(\Omega_\phi,d_\phi)$ is a universal $\phi$-derivation if and only if for any $B$-module $M$ and any $D\in\Der_K(\phi,M)$ there is a unique $B$-linear map $F:\Omega_\phi\to M$ such that $F\circ d_\phi=D.$
\end{prop}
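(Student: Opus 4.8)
The plan is to prove this as an immediate unwinding of the definition of a universal $\phi$-derivation, since the proposition merely restates the bijectivity of the natural map $\alpha_M$ from \eqref{eq_alpha} in elementwise terms. The key observation is that the claimed universal factorisation property is precisely what it means for each $\alpha_M$ to be a bijection, split into its surjectivity and injectivity parts.

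First I would assume $(\Omega_\phi,d_\phi)$ is a universal $\phi$-derivation and fix a $B$-module $M$ together with a $\phi$-derivation $D\in\Der_K(\phi,M)$. By hypothesis $\alpha_M$ is an isomorphism of $B$-modules, hence in particular a bijection of underlying sets. Surjectivity of $\alpha_M$ furnishes at least one $B$-linear map $F:\Omega_\phi\to M$ with $\alpha_M(F)=F\circ d_\phi=D$, while injectivity of $\alpha_M$ guarantees that this $F$ is unique: if $F'$ also satisfied $F'\circ d_\phi=D=F\circ d_\phi$, then $\alpha_M(F')=\alpha_M(F)$, forcing $F'=F$. This establishes the ``only if'' direction.

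For the converse, I would assume the universal factorisation property holds and fix an arbitrary $B$-module $M$. To show $\alpha_M$ is an isomorphism of $B$-modules it suffices to check it is a bijection, since $\alpha_M$ is already $B$-linear by inspection of its defining formula $F\mapsto F\circ d_\phi$. Given any $D\in\Der_K(\phi,M)$, the existence clause of the factorisation property yields an $F$ with $\alpha_M(F)=D$, so $\alpha_M$ is surjective; the uniqueness clause gives injectivity, since if $\alpha_M(F)=\alpha_M(F')$ then both $F$ and $F'$ factorise the same $\phi$-derivation $F\circ d_\phi$ through $d_\phi$, whence $F=F'$. Thus $\alpha_M$ is a bijective $B$-linear map and therefore an isomorphism.

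There is essentially no obstacle here: the entire content is the translation ``$\alpha_M$ bijective for all $M$'' $\Leftrightarrow$ ``existence and uniqueness of factorisation for all $M$,'' and the only points worth stating explicitly are that $\alpha_M$ is manifestly $B$-linear (so bijectivity upgrades automatically to a module isomorphism) and that $\alpha_M$ is well-defined in the first place, i.e. that $F\circ d_\phi$ really is a $\phi$-derivation whenever $F$ is $B$-linear. The latter is exactly the first Proposition of this section applied to $G=F$, so it may be invoked without further comment.
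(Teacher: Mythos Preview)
Your proposal is correct and matches the paper's approach exactly: the paper does not give a formal proof but simply remarks that one ``just need[s] to read what the bijectivity of $\alpha_M$ means,'' and your argument is precisely that reading spelled out in full. There is nothing to add.
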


In the special case of $B=A$ and $\phi=\Id_A$, we have that $(\Omega_{\Id_A},d_{\Id_A})$ is precisely the Kähler differential of $A$, which is usually denoted by $(\Omega_{A/K},d_{A/K})$. The relation between Kähler differentials and $\phi$-differentials will be investigated more deeply in Appendix \ref{app_Kähler}.

There is also another reason why we named $d_\phi$ a $\phi$-differential. In Section \ref{sec_Examples} we will show that some differentials appearing in algebra and geometry are in fact $\phi$-differentials for some $\phi$.

We left constructions of universal $\phi$-derivations for the next section because we want to show which results can be obtained by using only the universal factorisation property. Besides, introducing models of $(\Omega_\phi,d_\phi)$ at this point would only lead to unnecessary confusion. 

By using a standard argument applied to objects defined by a universal factorisation property, we obtain that there is a unique universal $\phi$-derivation up to a $B$-module isomorphism.

\begin{prop}
\label{prop_Der_UniqDer}
If $(\Omega_\phi,d_\phi)$ and $(\Omega'_\phi,d'_\phi)$ are two universal $\phi$-derivations, then there is a unique $B$-module isomorphism $F:\Omega_\phi\to\Omega'_\phi$ such that $F\circ d_\phi=d'_\phi$.
\end{prop}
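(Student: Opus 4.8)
The plan is to run the standard ``uniqueness up to unique isomorphism'' argument for objects characterised by a universal factorisation property, invoking the preceding Proposition in both directions. The whole proof rests on applying that factorisation property to the $\phi$-derivations $d_\phi$ and $d'_\phi$ themselves, viewed as maps into the modules $\Omega'_\phi$ and $\Omega_\phi$ respectively.

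First I would use the universality of $(\Omega_\phi,d_\phi)$. Since $d'_\phi:A\to\Omega'_\phi$ is a $\phi$-derivation into the $B$-module $\Omega'_\phi$, the universal factorisation property yields a unique $B$-linear map $F:\Omega_\phi\to\Omega'_\phi$ with $F\circ d_\phi=d'_\phi$. This single application already settles the uniqueness claim in the statement, since $F$ is by construction the only $B$-linear map satisfying $F\circ d_\phi=d'_\phi$; it remains only to show that $F$ is an isomorphism. Symmetrically, I would apply the universality of $(\Omega'_\phi,d'_\phi)$ to the $\phi$-derivation $d_\phi:A\to\Omega_\phi$, obtaining a unique $B$-linear map $F':\Omega'_\phi\to\Omega_\phi$ with $F'\circ d'_\phi=d_\phi$.

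The key step is then to compute $F'\circ F$. This is a $B$-linear endomorphism of $\Omega_\phi$ satisfying $(F'\circ F)\circ d_\phi=F'\circ(F\circ d_\phi)=F'\circ d'_\phi=d_\phi$. But $\Id_{\Omega_\phi}$ also satisfies $\Id_{\Omega_\phi}\circ d_\phi=d_\phi$, so by the uniqueness clause of the universal factorisation property of $(\Omega_\phi,d_\phi)$ applied to the module $M=\Omega_\phi$ and the $\phi$-derivation $D=d_\phi$, both $F'\circ F$ and $\Id_{\Omega_\phi}$ factor $d_\phi$, forcing $F'\circ F=\Id_{\Omega_\phi}$. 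The identical reasoning with the roles reversed gives $F\circ F'=\Id_{\Omega'_\phi}$, so $F$ is a $B$-module isomorphism with inverse $F'$.

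I do not expect a serious obstacle here; the only point requiring care is the self-referential application of uniqueness, namely taking $M$ to be $\Omega_\phi$ itself and recognising that $d_\phi$ can be factored through $\Omega_\phi$ only by the identity. It is precisely this observation that pins the composite down to $\Id_{\Omega_\phi}$ rather than to some nontrivial automorphism. Alternatively, the argument could be phrased entirely through the module isomorphisms $\alpha_M$ from the definition together with naturality, but the direct factorisation argument is cleaner and avoids unpacking $\alpha_M$.
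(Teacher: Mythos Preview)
Your proof is correct and follows essentially the same route as the paper: obtain $F$ and $F'$ from the two universal properties, then use the uniqueness clause with $M=\Omega_\phi$ and $M=\Omega'_\phi$ to force $F'\circ F=\Id_{\Omega_\phi}$ and $F\circ F'=\Id_{\Omega'_\phi}$. Your additional remark that uniqueness of $F$ already follows from the very first application of the factorisation property is correct and slightly sharper than what the paper states explicitly.
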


\begin{proof}
From the universality of $(\Omega_\phi,d_\phi)$ there is a $B$-module homomorphism $F:\Omega_\phi\to\Omega'_\phi$ such that $F\circ d_\phi=d'_\phi$. Similarly, from the universality of $(\Omega'_\phi,d'_\phi)$ there is a $B$-module homomorphism $F':\Omega'_\phi\to\Omega_\phi$ such that $F'\circ d'_\phi=d_\phi$. Hence
\begin{equation}
(F'\circ F) \circ d_\phi = F' \circ (F \circ d_\phi) = F' \circ d'_\phi = d_\phi,
\end{equation}
and obviously $\Id_{\Omega_\phi}\circ d_\phi = d_\phi$. Thus, from the uniqueness of the factor of $d_\phi$, we get that $F'\circ F = \Id_{\Omega_\phi}$. Likewise, using an analogous argument, we get that $F\circ F' = \Id_{\Omega'_\phi}$, and so $F$ is an isomorphism.
\end{proof}

Due to Proposition \ref{prop_Der_UniqDer}, we will call such $(\Omega_\phi,d_\phi)$ \emph{the} universal derivation, and we mean by that a fixed but arbitrary universal $\phi$-derivation. Similarly, we will use the notions of \emph{the} $\phi$-differential and \emph{the} module of $\phi$-differentials for $d_\phi$ and $\Omega_\phi$ respectively. Moreover, for the sake of convenience, we will sometimes identify $d_\phi$ with $(\Omega_\phi,d_\phi)$.

So far $A,B$ and $\phi:A\to B$ were fixed. We leave $B$ untouched and allow $A$ and $\phi$ to vary in a controlled manner. Let us consider the comma category $(\Alg_K\downarrow B)$, where $\Alg_K$ stands for the category of $K$-algebras. From the definition of $(\Alg_K\downarrow B)$, objects in this category are $K$-algebra homomorphisms $\phi:A\to B$ with the fixed codomain $B$, and morphisms from $\phi:A\to B$ to $\phi':A'\to B$ are $K$-algebra homomorphisms $H:A\to A'$ such that $\phi'\circ H=\phi$. 

Objects of $(\Alg_K\downarrow B)$ can be visualised as roughly vertical arrows targeting $B$, and morphisms as horizontal arrows closing diagrams made of such objects. 

\begin{equation}
\xymatrix{
A\ar[rr]^{H}\ar[dr]_{\phi}&& A'\ar[dl]^{\phi'} \\
&B&
}
\end{equation}

By assigning modules of $\phi$-differentials $\Omega_\phi$ to $K$-algebra homomorphisms $\phi:A\to B$, we obtain the following mapping between objects in categories:
\begin{equation}
(\Alg_K\downarrow B)\ni (\phi: A\to B) \overset{\Omega}{\longmapsto} \Omega_\phi \in \Mod_B,
\end{equation}
where $\Mod_B$ stands for the category of $B$-modules. We will show that this assignment $\Omega$ is a functor. First, however, we have to make sure that there is a reasonable mapping from arrows of $(\Alg_K\downarrow B)$ to arrows of $\Mod_B$.

\begin{prop}
\label{prop_Der_ComaCat}
Let $\phi:A\to B,\phi':A'\to B$ and $H:A\to A'$ be $K$-algebra homomorphisms such that $\phi'\circ H=\phi$. Then there is a unique $B$-linear map $\Omega_H:\Omega_\phi\to\Omega_{\phi'}$ such that $\Omega_H\circ d_\phi = d_{\phi'}\circ H$.
\end{prop}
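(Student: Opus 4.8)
The goal is to produce a $B$-linear map $\Omega_H : \Omega_\phi \to \Omega_{\phi'}$ with a specified factorization property, and the natural tool is the universal factorization property of $(\Omega_\phi, d_\phi)$ established earlier.

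The plan is to apply the universality of $(\Omega_\phi, d_\phi)$ to a suitably chosen $\phi$-derivation. The map I want to factor through $d_\phi$ is $d_{\phi'} \circ H : A \to \Omega_{\phi'}$, so the first step is to verify that this composite is genuinely a $\phi$-derivation with values in the $B$-module $\Omega_{\phi'}$. Since $d_{\phi'}$ is a $\phi'$-derivation and $H$ is a $K$-algebra homomorphism satisfying $\phi' \circ H = \phi$, I expect the Leibniz-type computation to work out directly: for $f, g \in A$ one checks that $d_{\phi'}(H(fg)) = d_{\phi'}(H(f)H(g)) = \phi'(H(f)) d_{\phi'}(H(g)) + \phi'(H(g)) d_{\phi'}(H(f))$, and then the relation $\phi' \circ H = \phi$ rewrites the coefficients as $\phi(f)$ and $\phi(g)$, exactly matching the $\phi$-derivation condition. $K$-linearity of the composite is immediate since both $H$ and $d_{\phi'}$ are $K$-linear.

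Once $d_{\phi'} \circ H \in \Der_K(\phi, \Omega_{\phi'})$ is established, the second step is a direct invocation of the universal factorization property of $(\Omega_\phi, d_\phi)$ with $M = \Omega_{\phi'}$ and $D = d_{\phi'} \circ H$. This immediately yields a unique $B$-linear map $\Omega_H : \Omega_\phi \to \Omega_{\phi'}$ such that $\Omega_H \circ d_\phi = d_{\phi'} \circ H$, which is precisely the assertion. The existence and the uniqueness both come packaged in the universal property, so no separate uniqueness argument is needed.

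I do not expect a genuine obstacle here; the only point requiring care is making sure the compatibility condition $\phi' \circ H = \phi$ is used at exactly the right moment to convert a $\phi'$-derivation identity (expressed in terms of $\phi'(H(f))$) into a $\phi$-derivation identity (expressed in terms of $\phi(f)$). This is the crux that makes the composite land in the correct module $\Der_K(\phi, \Omega_{\phi'})$ rather than some mismatched derivation space, and it is what licenses the application of $\alpha_{\Omega_{\phi'}}^{-1}$. The rest is a formal consequence of representability.
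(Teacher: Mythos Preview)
Your proposal is correct and follows essentially the same approach as the paper: define $D = d_{\phi'}\circ H$, verify it is a $\phi$-derivation via the computation $d_{\phi'}(H(fg)) = \phi'(H(f))d_{\phi'}(H(g)) + \phi'(H(g))d_{\phi'}(H(f)) = \phi(f)D(g) + \phi(g)D(f)$ using $\phi'\circ H = \phi$, and then invoke the universal property of $(\Omega_\phi, d_\phi)$ to obtain the unique $B$-linear factor $\Omega_H$.
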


\begin{proof}
Consider the $K$-linear map $D:A \to \Omega_{\phi'}$ defined by the formula 
\begin{equation}
D(f) = d_{\phi'}(H(f)).
\end{equation}
$D$ is a $\phi$-derivation. In fact, for any $f,g\in A$
\begin{equation}
\begin{split}
D(fg) & = d_{\phi'}(H(fg))=d_{\phi'}(H(f)H(g))\\
&=\phi'(H(f))d_{\phi'}(H(g))+\phi'(H(g))d_{\phi'}(H(f))\\
& = \phi(f)D(g) + \phi(g)D(f).
\end{split}
\end{equation}
Thus there is a unique $B$-linear map $\Omega_H:\Omega_\phi\to\Omega_{\phi'}$ such that $\Omega_H\circ d_\phi = D$, and since $D = d_{\phi'}\circ H$, we get that $\Omega_H\circ d_\phi = d_{\phi'}\circ H$.
\end{proof}

Henceforth, $\Omega_H$ will denote the map introduced in Proposition \ref{prop_Der_ComaCat}. We can visualise $\Omega_H$ as the unique $B$-linear map which makes the diagram
\begin{equation}
\label{diagram}
\xymatrix{
\Omega_\phi\ar[rr]^{\Omega_H}&& \Omega_{\phi'}\\
A\ar[u]_{d_{\phi}}\ar[rr]^{H}\ar[dr]_{\phi}&& A'\ar[u]^{d_{\phi'}}\ar[dl]^{\phi'} \\
&B&
}
\end{equation}  
commutative. As a consequence, we obtain a well defined assignment between arrows of categories
\begin{equation}
(\Alg_K\downarrow B)\ni H \overset{\Omega}{\longmapsto} \Omega_H \in \Mod_B.
\end{equation}
The next proposition asserts us that this mapping is in fact a covariant functor.

\begin{prop}
If $H:A\to A'$ and $H':A'\to A''$ are morphisms of $(\Alg_K\downarrow B)$ from $\phi:A\to B$ to $\phi:A'\to B$ and from $\phi':A'\to B$ to $\phi'':A''\to B$ respectively, then $\Omega_{H'}\circ \Omega_H = \Omega_{H'\circ H}$ and $\Omega_{\Id_A} = \Id_{\Omega_\phi}$.
\end{prop}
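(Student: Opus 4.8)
The plan is to deduce both functoriality laws purely from the uniqueness clause of the defining property of $\Omega_H$ established in Proposition \ref{prop_Der_ComaCat}, in exactly the spirit of the argument used for Proposition \ref{prop_Der_UniqDer}. First I would check that $H'\circ H$ is genuinely a morphism of $(\Alg_K\downarrow B)$ from $\phi$ to $\phi''$, since $\phi''\circ(H'\circ H)=(\phi''\circ H')\circ H=\phi'\circ H=\phi$. This guarantees that $\Omega_{H'\circ H}:\Omega_\phi\to\Omega_{\phi''}$ is well defined, and that it is the \emph{unique} $B$-linear map satisfying $\Omega_{H'\circ H}\circ d_\phi=d_{\phi''}\circ(H'\circ H)$.

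The key step is then to verify that the composite $\Omega_{H'}\circ\Omega_H$ obeys this same defining equation. Both $\Omega_{H'}\circ\Omega_H$ and $\Omega_{H'\circ H}$ are $B$-linear maps $\Omega_\phi\to\Omega_{\phi''}$, so by the uniqueness clause it suffices to check that $(\Omega_{H'}\circ\Omega_H)\circ d_\phi=d_{\phi''}\circ(H'\circ H)$. This is a direct computation using associativity of composition together with the two relations $\Omega_H\circ d_\phi=d_{\phi'}\circ H$ and $\Omega_{H'}\circ d_{\phi'}=d_{\phi''}\circ H'$ supplied by Proposition \ref{prop_Der_ComaCat}:
\begin{equation}
\begin{split}
(\Omega_{H'}\circ\Omega_H)\circ d_\phi &=\Omega_{H'}\circ(\Omega_H\circ d_\phi)=\Omega_{H'}\circ(d_{\phi'}\circ H)\\
&=(\Omega_{H'}\circ d_{\phi'})\circ H=(d_{\phi''}\circ H')\circ H=d_{\phi''}\circ(H'\circ H).
\end{split}
\end{equation}
The uniqueness of the factor of $d_\phi$ then forces $\Omega_{H'}\circ\Omega_H=\Omega_{H'\circ H}$.

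For the identity law I would argue identically: $\Omega_{\Id_A}$ is by definition the unique $B$-linear map with $\Omega_{\Id_A}\circ d_\phi=d_\phi\circ\Id_A=d_\phi$, and since $\Id_{\Omega_\phi}\circ d_\phi=d_\phi$ holds trivially, uniqueness yields $\Omega_{\Id_A}=\Id_{\Omega_\phi}$.

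There is no genuine obstacle here; the statement is the standard verification that a universal construction is functorial, and the only thing requiring attention is bookkeeping—confirming that $H'\circ H$ lands in the comma category so that $\Omega_{H'\circ H}$ is meaningful, and carefully tracking domains and codomains so that each defining relation is applied on the correct module. The entire force of the argument is concentrated in the uniqueness half of Proposition \ref{prop_Der_ComaCat}.
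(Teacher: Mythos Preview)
Your proof is correct and follows essentially the same approach as the paper: both compute the chain of equalities linking $(\Omega_{H'}\circ\Omega_H)\circ d_\phi$ with $d_{\phi''}\circ(H'\circ H)=\Omega_{H'\circ H}\circ d_\phi$ and then invoke the uniqueness clause of Proposition~\ref{prop_Der_ComaCat}. Your version is slightly more careful in explicitly verifying that $H'\circ H$ is a morphism of $(\Alg_K\downarrow B)$, which the paper leaves implicit.
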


\begin{proof}
\begin{equation}
\begin{split}
\Omega_{H'\circ H}\circ d_\phi &= d_{\phi''}\circ (H'\circ H)=(d_{\phi''}\circ H')\circ H = (\Omega_{H'}\circ d_{\phi'})\circ H\\
&=\Omega_{H'}\circ (d_{\phi'}\circ H)=\Omega_{H'}\circ(\Omega_H\circ d_\phi)=(\Omega_{H'}\circ \Omega_H)\circ d_\phi.
\end{split}
\end{equation}
From Proposition \ref{prop_Der_ComaCat} we have that there are unique maps closing diagrams of the type \eqref{diagram}. Thus
\begin{equation}
\Omega_{H'\circ H}\circ d_\phi = (\Omega_{H'}\circ \Omega_H)\circ d_\phi
\end{equation}
implies that $\Omega_{H'\circ H}=\Omega_{H'}\circ \Omega_H$. The second part of the proposition holds simply because $\Id_{\Omega_\phi}\circ d_\phi = d_\phi$.
\end{proof}

Now we consider a technical lemma which will help us in Section 2 with verifying whether some $\phi$-derivation is a universal one. The idea is to transform the uniqueness from the factorisation property into some sort of surjectivity of a $\phi$-derivation.

\begin{prop}
\label{prop_Der_UniFactor}
Let $\Omega$ be a $B$-module and let $d\in\Der_K(\phi,\Omega)$. Assume that for any $B$-module $M$ and any $D\in\Der_K(\phi,M)$ there exists a $B$-linear map $F:\Omega\to M$ such that $F\circ d= D$. Then the pair $(\Omega,d)$ is a universal $\phi$-derivation if and only if $\Span_B(\Ima d)=\Omega$.
\end{prop}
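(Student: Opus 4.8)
The plan is to exploit the fact that the existence of a factorising map is already built into the hypothesis, so that the pair $(\Omega,d)$ is universal precisely when the factoriser $F$ is always \emph{unique}. Recall that $(\Omega,d)$ is a universal $\phi$-derivation if and only if, for every $B$-module $M$ and every $D\in\Der_K(\phi,M)$, there is a unique $B$-linear $F:\Omega\to M$ with $F\circ d=D$. Since existence is assumed throughout, I would first reduce the claim to the statement that, under this standing existence assumption, uniqueness of $F$ holds for all $M$ and $D$ if and only if $\Span_B(\Ima d)=\Omega$. Both implications then rest on the elementary observation that a $B$-linear map out of $\Omega$ is entirely determined by its values on any $B$-generating subset of $\Omega$.

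For the direction assuming $\Span_B(\Ima d)=\Omega$, I would fix an arbitrary $B$-module $M$, a $\phi$-derivation $D\in\Der_K(\phi,M)$, and two $B$-linear maps $F_1,F_2:\Omega\to M$ satisfying $F_1\circ d=D=F_2\circ d$. Setting $F=F_1-F_2$, this is a $B$-linear map vanishing on $\Ima d$. Because $F$ is $B$-linear and $\Ima d$ generates $\Omega$ as a $B$-module, $F$ must vanish on all of $\Span_B(\Ima d)=\Omega$, so $F_1=F_2$. This yields uniqueness, and together with the assumed existence it gives universality.

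For the converse I would argue by contraposition. Suppose $N:=\Span_B(\Ima d)$ is a proper $B$-submodule of $\Omega$, so that the quotient $M:=\Omega/N$ is nonzero, and consider the canonical projection $\pi:\Omega\to M$. Since $\pi$ is $B$-linear and $d$ is a $\phi$-derivation, the composite $\pi\circ d$ is again a $\phi$-derivation; moreover $\Ima d\subseteq N=\ker\pi$ forces $\pi\circ d=0$. But the zero map $0:\Omega\to M$ also satisfies $0\circ d=0$, and $\pi\neq 0$ because $M\neq 0$. Thus the zero $\phi$-derivation admits two distinct factorisations through $d$, uniqueness fails, and hence $(\Omega,d)$ is not universal. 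This establishes the contrapositive and completes the equivalence.

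The argument is almost entirely formal, and I do not anticipate a serious obstacle. The only step requiring a little care is the converse, where one must manufacture an explicit failure of uniqueness: the natural device is the quotient by $\Span_B(\Ima d)$, and the crux is confirming that this quotient is genuinely nonzero—equivalent to the span being proper—so that $\pi$ is distinct from the zero map. Checking that the target derivation (here the zero map) is a legitimate $\phi$-derivation is immediate.
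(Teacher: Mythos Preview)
Your proof is correct. The backward direction (from $\Span_B(\Ima d)=\Omega$ to universality) matches the paper's argument essentially verbatim. The forward direction, however, takes a different route: you argue by contraposition, using the quotient $\Omega/\Span_B(\Ima d)$ to exhibit two distinct factorisations of the zero derivation, whereas the paper works directly with the submodule $\Omega_0=\Span_B(\Ima d)$, factors the truncated derivation $d_0:A\to\Omega_0$ through $d$ to obtain a $B$-linear map $F:\Omega\to\Omega_0$, and then uses uniqueness to show that the inclusion $\iota:\Omega_0\hookrightarrow\Omega$ satisfies $\iota\circ F=\Id_\Omega$, forcing $\iota$ to be surjective. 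The two arguments are dual in spirit---yours tests uniqueness against the cokernel of the inclusion, the paper's tests it against the inclusion itself---and both are standard. Your contrapositive is arguably a touch more transparent, since it produces an explicit counterexample to uniqueness rather than a retraction; the paper's version has the minor advantage of staying within the direct logical flow and reusing the same factorisation machinery already set up in the hypothesis.
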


\begin{proof}
First assume that $(\Omega,d)$ is a universal $\phi$-derivation. Let $\Omega_0$ denote the $B$-module $\Span_B(\Ima d)$ and let $\iota:\Omega_0\hookrightarrow\Omega$ be the natural inclusion. We put $d_0$ as the map $d$ with the codomain truncated to $\Omega_0$. Thus we have that $\iota\circ d_0 = d$. $d_0$ is clearly a $\phi$-derivation, hence there is a $B$-linear map $F:\Omega\to \Omega_0$ such that $F\circ d = d_0$. As a result
\begin{equation}
(\iota \circ F) \circ d = \iota \circ (F\circ d) = \iota \circ d_0 = d,
\end{equation}
and clearly $\Id_{\Omega}\circ d = d$. From the uniqueness of the factor of $d$ we obtain that $\iota \circ F = \Id_{\Omega}$. Hence $\iota$ is surjective, and so $\Omega = \Omega_0$.

Now assume that $\Omega = \Omega_0$. Let $M$ be a $B$-module and let $D\in\Der_B(\phi,M)$. So there exist $F_1,F_2:\Omega\to M$ such that $F_1\circ d=D$ and $F_2\circ d = D$. We need to show that $F_1=F_2$. If $\omega = \sum b_i d(f_i)\in \Omega$, then
\begin{equation}
\begin{split}
F_1(\omega)&=F_1\left(\sum b_id(f_i)\right)=\sum b_i F_1(d(f_i))=\sum b_i F_2(d(f_i))\\
&=F_2\left(\sum b_id(f_i)\right)=F_2(\omega).
\end{split}
\end{equation}
The assumption $\Omega = \Omega_0$ implies that any $\omega\in \Omega$ can be written as $\sum b_i d(f_i)$, and so $F_1=F_2$.
\end{proof}

As a direct consequence, we obtain that the module of $\phi$-differentials is generated as a $B$-module by the image of the $\phi$-differential. This also implies that the functor $\Omega$ preserves surjectivity.

\begin{prop}
\label{prop_Der_SurPre}
Assume that $H:A\to A'$ is an arrow of $(\Alg_K\downarrow B)$ from $\phi:A\to B$ to $\phi:A'\to B$. If $H$ is surjective, then $\Omega_H:\Omega_\phi\to\Omega_{\phi'}$ is surjective as well.
\end{prop}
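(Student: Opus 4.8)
The plan is to exploit the fact, recorded just after Proposition \ref{prop_Der_UniFactor}, that the module of $\phi'$-differentials is generated as a $B$-module by the image of its $\phi'$-differential, i.e. $\Span_B(\Ima d_{\phi'})=\Omega_{\phi'}$. Since $\Ima\Omega_H$ is the image of a $B$-linear map, it is automatically a $B$-submodule of $\Omega_{\phi'}$. Consequently, to establish surjectivity of $\Omega_H$ it suffices to check that $\Ima\Omega_H$ already contains the generating set $\Ima d_{\phi'}$; the full equality $\Ima\Omega_H=\Omega_{\phi'}$ then follows by spanning.

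To carry this out, I would fix an arbitrary generator $d_{\phi'}(f')$ with $f'\in A'$ and use the surjectivity of $H$ to choose some $f\in A$ with $H(f)=f'$. The defining commutativity relation $\Omega_H\circ d_\phi=d_{\phi'}\circ H$ from Proposition \ref{prop_Der_ComaCat} then gives
\begin{equation}
d_{\phi'}(f')=d_{\phi'}(H(f))=(\Omega_H\circ d_\phi)(f)=\Omega_H(d_\phi(f)),
\end{equation}
which exhibits $d_{\phi'}(f')$ as an element of $\Ima\Omega_H$. Hence $\Ima d_{\phi'}\subseteq\Ima\Omega_H$, and applying $\Span_B$ to this inclusion yields
\begin{equation}
\Omega_{\phi'}=\Span_B(\Ima d_{\phi'})\subseteq\Ima\Omega_H\subseteq\Omega_{\phi'},
\end{equation}
so that $\Ima\Omega_H=\Omega_{\phi'}$, i.e. $\Omega_H$ is surjective.

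I do not anticipate any genuine obstacle: the proof is a direct combination of the generation property of $\Omega_{\phi'}$ with the relation defining $\Omega_H$. The only point requiring a moment's attention is the reduction to a generating set, which rests on $\Ima\Omega_H$ being a $B$-submodule; once that observation is in place, the surjectivity of $H$ does the remaining work by lifting each $f'$ back to some $f$, and the verification becomes the short computation displayed above.
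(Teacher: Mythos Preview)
Your proof is correct and follows essentially the same approach as the paper: both use the generation property $\Span_B(\Ima d_{\phi'})=\Omega_{\phi'}$ from Proposition~\ref{prop_Der_UniFactor}, lift via the surjectivity of $H$, and apply the defining relation $\Omega_H\circ d_\phi=d_{\phi'}\circ H$. The paper phrases this elementwise (writing an arbitrary $\alpha'=\sum b_id_{\phi'}(f'_i)$ and exhibiting a preimage), while you phrase it in terms of the inclusion $\Ima d_{\phi'}\subseteq\Ima\Omega_H$ of generating sets, but the content is identical.
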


\begin{proof}
If $\alpha'\in \Omega_{\phi'}$, then, from Proposition \ref{prop_Der_UniFactor}, we get that $\alpha'=\sum b_id_{\phi'} (f'_i)$ for some elements $b_i\in B$ and $f'_i\in A'$. The surjectivity of $H$ implies that $f'_i=H(f_i)$ for some $f_i\in A$, and thus $d_{\phi'}f'_i=\Omega_H(d_\phi (f_i))$. As a result,
\begin{equation}
\alpha'=\sum b_id_{\phi'}(f'_i)=\sum b_i\Omega_H(d_\phi (f_i))=\Omega_H\left(\sum b_id_\phi (f_i)\right).
\end{equation}
\end{proof}

We will make use of Proposition \ref{prop_Der_SurPre} in Section \ref{sec_AffVar} when we investigate linear differential operators acting on the ring of regular functions of an affine variety. In a nutshell, this will be useful because for any affine variety there is a natural surjection from the algebra of polynomials to the algebra of regular functions.

In the realm of smooth manifolds, the differential at a point takes smooth functions and outputs cotangent vectors at this point. By definition, cotangent vectors are the dual objects to tangent vectors, which themselves may be seen as $\phi$-derivations for $\phi$ being an evaluation at a point. Due to this behaviour of the ordinary differential, we will now consider this kind of biduality in a more abstract framework. If $M$ is a $B$-module, then the dual $B$-module will be denoted by $M^\vee$. This means that $M^\vee = \Hom_B(M,B)$. Let $d^M_\phi:A\to\Der_K(\phi,M)^\vee$ be the map defined by the formula
\begin{equation}
\label{eq_AlgDiff}
d^M_\phi(f)(D) = D(f),
\end{equation}
where $f\in A$ and $D\in\Der_K(\phi,M)$.

\begin{prop}
For any $B$-module $M$ the map $d^M_\phi$ is a $\phi$-derivation.
\end{prop}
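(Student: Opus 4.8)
The plan is to verify the two defining requirements of a $\phi$-derivation directly from the definitions: that $d^M_\phi$ is a $K$-linear map into the $B$-module $\Der_K(\phi,M)^\vee$, and that it satisfies the twisted Leibniz rule. Since $\Der_K(\phi,M)^\vee = \Hom_B(\Der_K(\phi,M),B)$, two of its elements coincide exactly when they agree on every $D \in \Der_K(\phi,M)$; hence every identity I must establish can be checked after evaluating on an arbitrary derivation $D$, which reduces each claim to a property of $D$ itself.

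First I would confirm that $d^M_\phi$ is well defined, i.e. that for each fixed $f$ the functional $d^M_\phi(f)\colon D \mapsto D(f)$ is $B$-linear and therefore a legitimate element of $\Der_K(\phi,M)^\vee$. This is immediate from the natural $B$-module structure on $\Der_K(\phi,M)$: additivity gives $(D+D')(f) = D(f) + D'(f)$ and the scalar action gives $(bD)(f) = b\,D(f)$, so $d^M_\phi(f)(D+D') = d^M_\phi(f)(D) + d^M_\phi(f)(D')$ and $d^M_\phi(f)(bD) = b\,d^M_\phi(f)(D)$. The $K$-linearity of $f \mapsto d^M_\phi(f)$ is checked in the same fashion, by evaluating on a fixed $D$ and using that each $D$ is itself $K$-linear.

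The remaining step is the twisted Leibniz rule $d^M_\phi(fg) = \phi(f)\,d^M_\phi(g) + \phi(g)\,d^M_\phi(f)$, an identity in $\Der_K(\phi,M)^\vee$. I would prove it by evaluating both sides on an arbitrary $D$. The left-hand side yields $D(fg)$, while, using the $B$-action on the dual module, the right-hand side yields $\phi(f)\,D(g) + \phi(g)\,D(f)$; these coincide precisely because $D$ is a $\phi$-derivation. In other words, the twisting by $\phi$ for $d^M_\phi$ is inherited verbatim from the twisting already built into each element of the space being dualized.

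I do not expect a genuine obstacle: the statement is essentially formal, and its content is simply that dualizing $\Der_K(\phi,M)$ and evaluating at elements of $A$ transports the $\phi$-derivation property from the derivations $D$ onto the map $d^M_\phi$. The only point needing care is bookkeeping the two layers of $B$-module structure — the one on $\Der_K(\phi,M)$ used to establish well-definedness, and the one on its dual used to move the scalars $\phi(f)$ and $\phi(g)$ across the evaluation in the Leibniz computation.
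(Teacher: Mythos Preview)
Your proposal is correct and follows essentially the same approach as the paper: both verify the twisted Leibniz rule by evaluating on an arbitrary $D\in\Der_K(\phi,M)$ and invoking the $\phi$-derivation property of $D$, then dismiss $K$-linearity as evident. You are simply more thorough in spelling out why $d^M_\phi(f)$ actually lands in $\Der_K(\phi,M)^\vee$, a point the paper leaves implicit.
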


\begin{proof}
Simply, for any $f,g\in A$ and $D\in\Der_K(\phi,M)$ we have that
\begin{equation}
\begin{split}
d^M_\phi(fg)(D)&=D(fg)=\phi(f)D(g)+\phi(g)D(f) \\
&=(\phi(f)d^M_\phi(g)+\phi(g)d^M_\phi(f))(D).
\end{split}
\end{equation}
The $K$-linearity of $d^M_\phi$ is clear because everything here is $K$-linear.
\end{proof}

\noindent Consider now the special case where $M=B$. In this situation we have a necessary and sufficient condition for $d_\phi^B:A\to\Der_K(A,B)^\vee$ to be a $\phi$-differential.

\begin{prop}
\label{prop_Der_BidualDiff}
$(\Der_K(\phi,B)^\vee, d^B_\phi)$ is a universal $\phi$-derivation if and only if the module of $\phi$-differentials $\Omega_\phi$ is reflexive as a $B$-module.
\end{prop}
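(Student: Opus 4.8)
The plan is to exhibit an explicit natural $B$-linear comparison map from $\Omega_\phi$ to $\Der_K(\phi,B)^\vee$ which intertwines $d_\phi$ and $d^B_\phi$, and which turns out to be an isomorphism precisely when $\Omega_\phi$ is reflexive; universality is then transported along it.

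First I would unwind the consequences of universality in the special case $M=B$. The isomorphism $\alpha_B\colon\Hom_B(\Omega_\phi,B)\to\Der_K(\phi,B)$, $F\mapsto F\circ d_\phi$, identifies $\Der_K(\phi,B)$ with the dual module $\Omega_\phi^\vee$. Applying the contravariant functor $(\cdot)^\vee=\Hom_B(\cdot,B)$, which sends isomorphisms to isomorphisms, yields an isomorphism $\alpha_B^\vee\colon\Der_K(\phi,B)^\vee\to(\Omega_\phi^\vee)^\vee=\Omega_\phi^{\vee\vee}$ given by precomposition, $\psi\mapsto\psi\circ\alpha_B$. Let $c\colon\Omega_\phi\to\Omega_\phi^{\vee\vee}$ denote the canonical biduality map, $c(\omega)(F)=F(\omega)$, so that $\Omega_\phi$ is reflexive exactly when $c$ is an isomorphism. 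I then set $\Phi=(\alpha_B^\vee)^{-1}\circ c\colon\Omega_\phi\to\Der_K(\phi,B)^\vee$.

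The heart of the argument is the identity $\Phi\circ d_\phi=d^B_\phi$, which I would verify in the equivalent form $\alpha_B^\vee\circ d^B_\phi=c\circ d_\phi$ by a direct computation. Evaluating both sides at $f\in A$ and then at an arbitrary $F\in\Omega_\phi^\vee$, the left-hand side gives $d^B_\phi(f)\bigl(\alpha_B(F)\bigr)=(F\circ d_\phi)(f)=F(d_\phi(f))$, using $d^B_\phi(f)(D)=D(f)$ and $\alpha_B(F)=F\circ d_\phi$, while the right-hand side gives $c(d_\phi(f))(F)=F(d_\phi(f))$. They agree, so $\Phi$ is a $B$-linear map carrying $d_\phi$ to $d^B_\phi$, and since $\alpha_B^\vee$ is always an isomorphism, $\Phi$ is an isomorphism if and only if $c$ is.

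Finally I would close both implications. If $\Omega_\phi$ is reflexive then $c$, hence $\Phi$, is an isomorphism, and transporting the universal factorisation property of $(\Omega_\phi,d_\phi)$ along $\Phi$ shows that $(\Der_K(\phi,B)^\vee,d^B_\phi)$ is universal. Conversely, if $(\Der_K(\phi,B)^\vee,d^B_\phi)$ is universal, Proposition \ref{prop_Der_UniqDer} produces a unique $B$-module isomorphism $G\colon\Omega_\phi\to\Der_K(\phi,B)^\vee$ with $G\circ d_\phi=d^B_\phi$; but $\Phi$ already satisfies $\Phi\circ d_\phi=d^B_\phi$, so by the uniqueness of the factor of a $\phi$-derivation through $d_\phi$ we get $G=\Phi$, whence $\Phi$, and therefore $c$, is an isomorphism and $\Omega_\phi$ is reflexive. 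The main obstacle I anticipate is purely bookkeeping: keeping the evaluations, the duals, and the direction of $\alpha_B$ straight in the identity $\alpha_B^\vee\circ d^B_\phi=c\circ d_\phi$, and justifying carefully that the abstract isomorphism furnished by Proposition \ref{prop_Der_UniqDer} must coincide with the concretely constructed map $\Phi$.
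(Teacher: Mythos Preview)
Your proof is correct and is essentially the same as the paper's: your map $\Phi=(\alpha_B^\vee)^{-1}\circ c$ is exactly the paper's $\beta^\vee\circ j$ (since $\beta=\alpha_B^{-1}$ gives $\beta^\vee=(\alpha_B^\vee)^{-1}$ and $j=c$), and the key identity $\Phi\circ d_\phi=d^B_\phi$ is verified by the same elementwise computation. The only cosmetic difference is that you spell out the converse direction more explicitly via Proposition~\ref{prop_Der_UniqDer} and the uniqueness of factorisation, whereas the paper compresses this into the single assertion that the pair is universal iff $\beta^\vee\circ j$ is an isomorphism.
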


\begin{proof}
Let $j:\Omega_\phi\to\Omega_\phi^{\vee\vee}$ be the natural map to the bidual. The reflexivity of $\Omega_\phi$ means that $j$ is an isomorphism of $B$-modules. The universality of $\Omega_\phi$ implies that $B$-modules $\Omega_\phi^\vee$ and $\Der_K(\phi,B)$ are isomorphic via the map $\alpha_B$ defined as in \eqref{eq_alpha}. Let $\beta$ denote the inverse map of $\alpha_B$. This isomorphism $\beta:\Der_K(\phi,B)\to\Omega_\phi^\vee$, from the definition of $\alpha_B$, is such that for any $f\in A$ and any $D\in\Der_K(\phi,B)$
\begin{equation}
\beta(D)(d_\phi(f)) = D(f).
\end{equation}
We have that $(\beta^\vee\circ j)\circ d_\phi=d^B_\phi,$ where $\beta^\vee:\Omega_\phi^{\vee\vee}\to\Der_K(\phi,B)^\vee$ is the dual map to $\beta.$ In fact, for any $f\in A$ and $D\in\Der_K(\phi,B)$
\begin{equation}
(\beta^\vee(j(d_\phi(f)))(D)=(j(d_\phi(f))(\beta(D))=\beta(D)(d_\phi(f))=D(f)=(d^B_\phi(f))(D).
\end{equation}
Thus $(\Der_K(\phi,B)^\vee, d^B_\phi)$ is a universal $\phi$-derivation if and only if $\beta^\vee\circ j$ is an isomorphism. But since $\beta$ is an isomorphism, we get that $\beta^\vee\circ j$ is an isomorphism if and only if $j$ is an isomorphism.
\end{proof}

The case which will concern us the most is the one in which $B$ is a field. If a vector space is finite dimensional, then it is reflexive. Thus, as a corollary, we get a degenerated version of Proposition \ref{prop_Der_BidualDiff}.

\begin{corollary}
\label{cor_Der_BidualDiff}
If $B$ is a field and the module of $\phi$-differentials $\Omega_\phi$ is a finite dimensional vector space over the field $B$, then $d_\phi^B$ is a $\phi$-differential.
\end{corollary}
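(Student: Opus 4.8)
The corollary is a degenerate special case of Proposition~\ref{prop_Der_BidualDiff}, so the natural strategy is to reduce it directly to that proposition. Proposition~\ref{prop_Der_BidualDiff} tells us that $(\Der_K(\phi,B)^\vee, d^B_\phi)$ is a universal $\phi$-derivation precisely when $\Omega_\phi$ is reflexive as a $B$-module. Under the present hypotheses $B$ is a field and $\Omega_\phi$ is a finite-dimensional $B$-vector space, so the entire task is to verify that finite-dimensionality forces reflexivity, and then to read off the conclusion.

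**Key steps.** First I would recall that for a vector space $V$ over a field $B$, the natural evaluation map $j\colon V\to V^{\vee\vee}$ sending $v$ to the functional $\lambda\mapsto\lambda(v)$ is always injective, and that when $\dim_B V<\infty$ one has $\dim_B V=\dim_B V^\vee=\dim_B V^{\vee\vee}$; an injective linear map between vector spaces of equal finite dimension is an isomorphism, so $j$ is an isomorphism and $V$ is reflexive. Applying this with $V=\Omega_\phi$ gives that the finite-dimensional module of $\phi$-differentials is reflexive as a $B$-module. Second, I would invoke Proposition~\ref{prop_Der_BidualDiff}: since $\Omega_\phi$ is reflexive, the pair $(\Der_K(\phi,B)^\vee, d^B_\phi)$ is a universal $\phi$-derivation, and in particular $d^B_\phi$ is a $\phi$-differential, which is exactly the claim.

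**Main obstacle.** There is essentially no obstacle here; the only thing to be careful about is that the standard ``finite-dimensional implies reflexive'' fact genuinely requires $B$ to be a \emph{field} (not merely a commutative ring), which is hypothesized, so that $\Omega_\phi$ really is a vector space and dimension counting applies. One subtlety worth a sentence is that a finitely generated module over a general ring need not be reflexive, so it is the field hypothesis—guaranteeing that ``finite-dimensional'' and the dimension-counting argument make sense—that does all the work. Given Proposition~\ref{prop_Der_BidualDiff}, the proof is then immediate.
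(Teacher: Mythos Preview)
Your proposal is correct and follows exactly the paper's approach: the paper simply remarks that a finite-dimensional vector space is reflexive and then invokes Proposition~\ref{prop_Der_BidualDiff}. If anything, you supply more detail (the injectivity of $j$ and the dimension count) than the paper does.
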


\section{Three models of the universal $\phi$-derivation}
\label{sec_Models}

We have from Proposition \ref{prop_Der_UniqDer} that all universal $\phi$-derivations are isomorphic and from perspective of $B$-modules they are the same object. However, if we wish to find a $K$-linear map from $\Omega_{\phi}$ to some $K$-module, then we cannot rely only on the universal property of $(\Omega_\phi,d_\phi)$. In such case, it may happen that we have to refer to models of the universal $\phi$-derivation. Such issue appears with Kähler differentials in the construction of the algebraic de-Rham complex. Compare for instance constructions of the algebraic de-Rham complex provided by \cite[thm 16.6.2]{Grothendieck} and by \cite[prop. 4.5.3]{Brandenburg}. The latter is much more straightforward due to the use of a different model of the Kähler differential. 

In this section, we will give detailed constructions of three notably different models of the universal $\phi$-derivation. For the sake of unambiguity and to omit confusions we will give a distinct name to each model.

\bigskip
\noindent\textbf{First model: Classical universal $\phi$-derivation}
\medskip

Let us consider $B\otimes_K A$ with the following $B$-module structure:
\begin{equation}
a \cdot b\otimes f = ab\otimes f,
\end{equation}
and with the following $B$-algebra structure:
\begin{equation}
a\otimes f \cdot b\otimes g = ab\otimes fg.
\end{equation}
Let $p_\phi:B\otimes_K A\to B$ be a $B$-algebra homomorphism given by the formula
\begin{equation}
\label{eq_pphi}
p_\phi(a\otimes f) = a\phi(f),
\end{equation} 
and let $\delta_\phi:A\to B\otimes_K A$ be a $K$-linear map defined as
\begin{equation}
\delta_\phi(f)= 1_B\otimes f - \phi(f)\otimes 1_A.
\end{equation}
Let $I_\phi = \ker p_\phi$ and let $J_\phi$ be the $B$-module generated by $\Ima\delta_\phi$. I.e., $J_\phi=\Span_B(\Ima\delta_\phi).$ We claim that $I_\phi=J_\phi$. In fact, the inclusion $I_\phi\supset J_\phi$ follows from the fact that $p_\phi\circ \delta_\phi=0$. Conversely, consider an arbitrary $\sum a_i\otimes f_i\in I_\phi$. Then
\begin{equation}
\sum a_i\otimes f_i = \sum (a_i\otimes 1_A)(1_B\otimes f_i - \phi(f_i)\otimes 1_A) = \sum a_i\delta_\phi (f_i).
\end{equation} 
This shows that $I_\phi\subset J_\phi$. $I_\phi\subset B\otimes_K A$ is an ideal because it is the kernel of a $B$-algebra homomorphism. Hence $I^2_\phi$ is a well defined $B$-module. We define the $B$-module $\Omega^C_{\phi} = I_\phi/I^2_\phi$ and the $K$-linear map $d^C_{\phi}:A\to\Omega_{\phi}$ as $\pi^C\circ \delta_\phi$, where $\pi^C:I_\phi\to \Omega_{\phi}^C$ is the natural projection.

We are aiming to prove that $d^C_{\phi}$ is a $\phi$-differential. First, however, we need two lemmas. The first one states that $\phi$-derivations vanish on the identity.

\begin{lemma}
\label{lem_Mod_DerOn1}
For any $B$-module $M$ and any $D\in \Der_K(\phi,M)$ we have that $D(1_A)=0$.
\end{lemma}

\begin{proof}
Simply,
\begin{equation}
D(1_A)=D(1_A\cdot 1_A)=\phi(1_A)D(1_A)+\phi(1_A)D(1_A)=2\phi(1_A)D(1_A).
\end{equation}
Since $\phi$ is a $K$-algebra homomorphism, we get that $\phi(1_A)=1_B$, and so $D(1_A)=0$.
\end{proof}

\noindent The second lemma tells us that $\delta_\phi$ behaves almost like a $\phi$-derivation.

\begin{lemma}
\label{lem_Mod_Delta}
For any $f,g\in A$
\begin{equation}
\label{eq_delta}
\delta_\phi(fg)-\phi(f)\delta_\phi(g)-\phi(g)\delta_\phi(f) = \delta_\phi(f)\delta_\phi(g).
\end{equation}
\end{lemma}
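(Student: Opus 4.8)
The plan is to verify the identity \eqref{eq_delta} by a direct computation inside the $B$-algebra $B\otimes_K A$, expanding both sides in terms of elementary tensors. The one thing to keep straight is that two different products are in play: the $B$-module scaling $a\cdot(b\otimes f)=ab\otimes f$, which governs the terms $\phi(f)\delta_\phi(g)$ and $\phi(g)\delta_\phi(f)$ on the left-hand side, and the $B$-algebra multiplication $(a\otimes f)(b\otimes g)=ab\otimes fg$, which governs the product $\delta_\phi(f)\delta_\phi(g)$ on the right-hand side. Throughout I would use that $\phi$ is a $K$-algebra homomorphism, so $\phi(fg)=\phi(f)\phi(g)$, and that $B$ is commutative, so $\phi(f)\phi(g)=\phi(g)\phi(f)$.

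First I would expand the left-hand side. Substituting $\delta_\phi(f)=1_B\otimes f-\phi(f)\otimes 1_A$ and applying the module scaling, the term $\delta_\phi(fg)$ contributes $1_B\otimes fg-\phi(f)\phi(g)\otimes 1_A$, while $\phi(f)\delta_\phi(g)$ and $\phi(g)\delta_\phi(f)$ contribute $\phi(f)\otimes g-\phi(f)\phi(g)\otimes 1_A$ and $\phi(g)\otimes f-\phi(g)\phi(f)\otimes 1_A$ respectively. Collecting these, two of the three tensors supported on $1_A$ cancel and the remaining one simplifies by commutativity, so the left-hand side reduces to $1_B\otimes fg-\phi(f)\otimes g-\phi(g)\otimes f+\phi(f)\phi(g)\otimes 1_A$. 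Next I would expand the right-hand side by multiplying out $(1_B\otimes f-\phi(f)\otimes 1_A)(1_B\otimes g-\phi(g)\otimes 1_A)$ with the $B$-algebra structure; the four resulting products are $1_B\otimes fg$, $-\phi(g)\otimes f$, $-\phi(f)\otimes g$, and $\phi(f)\phi(g)\otimes 1_A$, which is exactly the simplified left-hand side. Hence the two sides agree and the identity follows.

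I do not expect any genuine obstacle here: the lemma is a purely formal manipulation whose only difficulty is careful bookkeeping of which of the two products applies to which term. The conceptual point worth recording is that the right-hand side $\delta_\phi(f)\delta_\phi(g)$ is a product of two elements of $I_\phi=\Span_B(\Ima\delta_\phi)$ and therefore lies in $I_\phi^2$. This is precisely what will make $d^C_\phi=\pi^C\circ\delta_\phi$ a genuine $\phi$-derivation once we pass to the quotient $\Omega^C_\phi=I_\phi/I^2_\phi$, since the obstruction term measuring the failure of the $\phi$-Leibniz rule vanishes there.
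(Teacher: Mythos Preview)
Your proposal is correct and takes essentially the same approach as the paper: both proofs are direct expansions in elementary tensors, with the only cosmetic difference that the paper expands the right-hand side first and regroups it into the left-hand side, whereas you expand both sides separately and match them. Your closing remark about $\delta_\phi(f)\delta_\phi(g)\in I_\phi^2$ is exactly the point the paper uses immediately afterward to show $d^C_\phi$ is a $\phi$-derivation.
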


\begin{proof}
First, from the definition of $\delta_\phi$, we have that
\begin{equation}
\begin{split}
\delta_\phi(f)\delta_\phi(g)&=(1\otimes f - \phi(f)\otimes 1)(1\otimes g - \phi(g)\otimes 1)\\
&=1\otimes fg - \phi(f)\otimes g - \phi(g)\otimes f +\phi(fg)\otimes 1.
\end{split}
\end{equation}
By adding and subtracting $\phi(fg)\otimes 1$ we obtain that the above equals
\begin{equation}
(1\otimes fg - \phi(fg)\otimes 1) - (\phi(f)\otimes g - \phi(fg)\otimes 1) - (\phi(g)\otimes f - \phi(fg)\otimes 1),
\end{equation}
and this is
\begin{equation}
\delta_\phi(fg)-\phi(f)\delta_\phi(g)-\phi(g)\delta_\phi(f).
\end{equation}
\end{proof}

\begin{theorem}
$(\Omega^C_\phi,d^C_\phi)$ is a universal $\phi$-derivation.
\end{theorem}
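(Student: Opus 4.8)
The plan is to reduce the whole statement to Proposition \ref{prop_Der_UniFactor}, which requires two things: that every $\phi$-derivation factors (not necessarily uniquely) through $d^C_\phi$, and that $\Span_B(\Ima d^C_\phi)=\Omega^C_\phi$. The span condition is immediate. Since the natural projection $\pi^C\colon I_\phi\to\Omega^C_\phi$ is a surjective $B$-linear map and, by the computation $I_\phi=J_\phi$ carried out above, $I_\phi=\Span_B(\Ima\delta_\phi)$, applying $\pi^C$ gives $\Omega^C_\phi=\pi^C(I_\phi)=\Span_B(\pi^C(\Ima\delta_\phi))=\Span_B(\Ima d^C_\phi)$, using that a $B$-linear image of a span is the span of the image. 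Thus all the real content lies in the factorisation property.

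First I would confirm that $d^C_\phi$ is genuinely a $\phi$-derivation; it is $K$-linear as a composite of the $K$-linear maps $\delta_\phi$ and $\pi^C$. Applying the $B$-linear map $\pi^C$ to the identity of Lemma \ref{lem_Mod_Delta} and recalling that $\delta_\phi(f),\delta_\phi(g)\in\Ima\delta_\phi\subseteq I_\phi$, so that their product lies in $I^2_\phi$ and is annihilated by $\pi^C$, collapses the right-hand side to zero. What remains is exactly $d^C_\phi(fg)=\phi(f)d^C_\phi(g)+\phi(g)d^C_\phi(f)$.

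The heart of the argument---and the step I expect to be the main obstacle---is constructing, for a given $B$-module $M$ and $D\in\Der_K(\phi,M)$, a $B$-linear $F\colon\Omega^C_\phi\to M$ with $F\circ d^C_\phi=D$. The device I would use is the \emph{square-zero} (trivial) extension: endow $B\oplus M$ with the $K$-algebra product $(b,m)(b',m')=(bb',bm'+b'm)$, and define $\Psi\colon B\otimes_K A\to B\oplus M$ on simple tensors by $\Psi(b\otimes f)=(b\phi(f),bD(f))$, which is well defined by $K$-bilinearity and $B$-linear for the left $B$-action. The crux is checking that $\Psi$ is a $K$-algebra homomorphism: the second coordinate of $\Psi\bigl((b\otimes f)(b'\otimes g)\bigr)$ is $bb'D(fg)$, and the $\phi$-derivation rule for $D$ expands this to $bb'\bigl(\phi(f)D(g)+\phi(g)D(f)\bigr)$, matching the second coordinate of $\Psi(b\otimes f)\,\Psi(b'\otimes g)$; unitality uses $D(1_A)=0$ from Lemma \ref{lem_Mod_DerOn1}.

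With $\Psi$ in place the rest is bookkeeping. Because $p_\phi$ is the composite of $\Psi$ with the projection $B\oplus M\to B$, the restriction $\Psi|_{I_\phi}$ lands in the ideal $0\oplus M$, yielding a $B$-linear map $\Psi_M\colon I_\phi\to M$. Since this ideal squares to zero, $\Psi_M$ kills products of elements of $I_\phi$, hence vanishes on $I^2_\phi$ and descends to a $B$-linear $F\colon\Omega^C_\phi\to M$ with $F\circ\pi^C=\Psi_M$. A short computation gives $\Psi(\delta_\phi(f))=(0,D(f))$---again invoking $D(1_A)=0$---so that $F(d^C_\phi(f))=\Psi_M(\delta_\phi(f))=D(f)$, establishing the factorisation. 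Both hypotheses of Proposition \ref{prop_Der_UniFactor} now hold, and we conclude that $(\Omega^C_\phi,d^C_\phi)$ is a universal $\phi$-derivation. Everything downstream of the multiplicativity check is forced, so I expect that verification of $\Psi$ to be the only genuinely creative point.
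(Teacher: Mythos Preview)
Your proposal is correct and follows the same overall architecture as the paper: reduce to Proposition~\ref{prop_Der_UniFactor}, verify the span condition via $I_\phi=J_\phi$, check that $d^C_\phi$ is a $\phi$-derivation using Lemma~\ref{lem_Mod_Delta}, and then construct a factor for an arbitrary $D$. The factor map you build is even literally the same as the paper's---your $\Psi_M$ is the restriction to $I_\phi$ of $b\otimes f\mapsto bD(f)$, which is exactly the paper's $\bar\gamma$.

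The one genuine divergence is in how you argue that this map kills $I_\phi^2$. The paper does it by a direct computation: it evaluates $\bar\gamma(\delta_\phi(f)\delta_\phi(g))$ using the identity of Lemma~\ref{lem_Mod_Delta} and observes that the result is $D(fg)-\phi(f)D(g)-\phi(g)D(f)=0$. You instead pass through the square-zero extension $B\oplus M$, promote $\bar\gamma$ to a $K$-algebra homomorphism $\Psi$, and let the vanishing on $I_\phi^2$ fall out of $\Psi(I_\phi)^2\subseteq(0\oplus M)^2=0$. Your route is slightly more conceptual---it explains structurally why the square must die---at the cost of introducing an auxiliary algebra and verifying multiplicativity of $\Psi$ (which, as you note, is where the $\phi$-derivation identity for $D$ is actually consumed). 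The paper's route is shorter and stays entirely within module maps, reusing Lemma~\ref{lem_Mod_Delta} a second time rather than introducing new machinery. Both are standard and equally valid.
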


\begin{proof}
We have from Lemma \ref{lem_Mod_Delta} that $d_{\phi}^C$ is a $\phi$-derivation. The module $\Omega^C_{\phi}$ equals $\Span_B(\Ima d^C_{\phi})$ because $I_\phi=J_\phi=\Span_B(\Ima \delta_\phi)$ and $d^C_{\phi}=\pi^C\circ \delta_\phi$. Thus, from Proposition \ref{prop_Der_UniFactor}, we just need to find a factor of an arbitrary $\phi$-derivation. So let $M$ be any $B$-module and let $D\in\Der_K(\phi,M)$. Consider the $B$-linear map $\gamma: B\otimes_K A \to M$ given by the formula
\begin{equation}
\gamma(a\otimes f) = aD(f) 
\end{equation}
and truncate it to $\bar{\gamma}:I_\phi \to M$. The map $\bar{\gamma}$ vanishes on $I_\phi^2.$ In fact, using Lemma \ref{lem_Mod_DerOn1}, we first notice that for any $f\in A$
\begin{equation}
\bar{\gamma}(\delta_\phi(f)) = D(f)-\phi(f)D(1_A) = D(f).
\end{equation}
Hence, using \eqref{eq_delta} from Lemma \ref{lem_Mod_Delta}, we get that for any $f,g\in A$
\begin{equation}
\begin{split}
\bar{\gamma}(\delta_\phi(f)\delta_\phi(g))&= \bar{\gamma}(\delta_\phi(fg)-\phi(f)\delta_\phi(g)-\phi(g)\delta_\phi(f))\\
&=D(fg)-\phi(f)D(g)-\phi(g)D(f)=0.
\end{split}
\end{equation}
Thus there is a $B$-linear map $F:\Omega_{\phi}^C\to M$ such that $F\circ d_{\phi}^C = D$.
\end{proof}

\begin{definition*}
We will call $(\Omega^C_\phi,d^C_\phi)$ the \emph{classical universal $\phi$-derivation}.
\end{definition*}

We call it classical because it is just a modification of the standard construction of the Kähler differential appearing in classic sources like \cite[p. 568-569]{Bourbaki} or \cite[p. 210-215]{H-S}. This model will be most interesting for us because, as we will see in Section \ref{sec_Examples}, modules $\Omega^C_\phi$ for particular homomorphisms $\phi$ appear naturally in algebra and geometry. Moreover, in Sections \ref{sec_Elliptic} and \ref{sec_AffVar} we will extensively use the map $\delta_\phi$ introduced in this model.

\bigskip
\noindent\textbf{Second model: Rough universal $\phi$-derivation}
\medskip

Let $B^{(A)}$ be the free $B$-module generated by the set $A$. Consider its $B$-submodule $S$ generated by the set
\begin{equation}
\left\lbrace \left\langle k\right\rangle :k\in K\right\rbrace\cup \left\lbrace \left\langle fg \right\rangle -\phi(f)\left\langle g \right\rangle-\phi(g)\left\langle f \right\rangle:f,g\in A\right\rbrace,
\end{equation} 
where $\left\langle f\right\rangle:A\to B$ stands for the base element of $B^{(A)}$ corresponding to $f$. This means that
\begin{equation}
\left\langle f \right\rangle(g) = 
\begin{cases}
1_B,& \text{if } g=f\\
0,  & \text{if } g\neq f
\end{cases}.
\end{equation}
We define the $B$-module $\Omega_{\phi}^R$ as the quotient $B^{(A)}/S$ and the $K$-linear map $d^R_{\phi}:A\to\Omega^R_{\phi}$ by the formula
\begin{equation}
d^R_{\phi}(f)=\pi^R(\left\langle f \right\rangle),
\end{equation} 
where $\pi^R:B^{(A)}\to \Omega^R_{\phi}$ is the natural projection.

\begin{theorem}
$(\Omega^R_\phi,d^R_\phi)$ is a universal $\phi$-derivation.
\end{theorem}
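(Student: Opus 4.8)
The plan is to invoke Proposition~\ref{prop_Der_UniFactor}: it suffices to show that $d^R_\phi$ is a $\phi$-derivation, that every $\phi$-derivation factors through it, and that $\Span_B(\Ima d^R_\phi)=\Omega^R_\phi$. The last point is immediate, since the basis elements $\langle f\rangle$ generate $B^{(A)}$, hence their images $\pi^R(\langle f\rangle)=d^R_\phi(f)$ generate the quotient $\Omega^R_\phi$; thus $\Ima d^R_\phi$ already spans $\Omega^R_\phi$ over $B$.

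First I would check that $d^R_\phi\in\Der_K(\phi,\Omega^R_\phi)$. The Leibniz identity is built directly into $S$: the element $\langle fg\rangle-\phi(f)\langle g\rangle-\phi(g)\langle f\rangle$ lies in $S$, so applying $\pi^R$ yields exactly $d^R_\phi(fg)=\phi(f)d^R_\phi(g)+\phi(g)d^R_\phi(f)$. Likewise, the generators $\langle k\rangle$ force $d^R_\phi$ to vanish on the image of $K$ in $A$, and together with the Leibniz relation this gives $K$-homogeneity, since $\langle kf\rangle=\langle (k1_A)f\rangle\equiv\phi(k1_A)\langle f\rangle+\phi(f)\langle k1_A\rangle=k\langle f\rangle$ modulo $S$, using $\langle k1_A\rangle\in S$ and $\phi(k1_A)=k1_B$. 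The point that requires care, and which I expect to be the main obstacle, is the additivity of $d^R_\phi$: because $\langle\cdot\rangle$ is a priori only a free basis indexed by the underlying set of $A$, one must ensure that the relations defining $S$ are strong enough to force $\langle f+g\rangle\equiv\langle f\rangle+\langle g\rangle$ in the quotient, i.e. that $S$ contains the corresponding additivity relations, so that $d^R_\phi$ is genuinely $K$-linear rather than merely Leibniz on products. I would isolate and verify this linearity step explicitly before proceeding.

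Granting that $d^R_\phi$ is a $\phi$-derivation, the factorization is routine. Given any $B$-module $M$ and any $D\in\Der_K(\phi,M)$, I would define a $B$-linear map $\tilde D\colon B^{(A)}\to M$ on the basis by $\tilde D(\langle f\rangle)=D(f)$ and check that it annihilates the generators of $S$: on $\langle k\rangle$ one has $\tilde D(\langle k\rangle)=D(k1_A)=kD(1_A)=0$ by Lemma~\ref{lem_Mod_DerOn1}, and on the Leibniz generators $\tilde D(\langle fg\rangle-\phi(f)\langle g\rangle-\phi(g)\langle f\rangle)=D(fg)-\phi(f)D(g)-\phi(g)D(f)=0$ because $D$ is a $\phi$-derivation. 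Hence $\tilde D$ descends to a $B$-linear map $F\colon\Omega^R_\phi\to M$ with $F\circ d^R_\phi=D$. Combined with the spanning statement, Proposition~\ref{prop_Der_UniFactor} then yields uniqueness of $F$ and concludes that $(\Omega^R_\phi,d^R_\phi)$ is a universal $\phi$-derivation.
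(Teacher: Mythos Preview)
Your instinct about additivity is exactly right, and the step you flag cannot in fact be carried out: the submodule $S$ as written does \emph{not} force $\langle f+g\rangle\equiv\langle f\rangle+\langle g\rangle$, so $d^R_\phi$ is not $K$-linear and hence not a $\phi$-derivation at all. For a concrete failure take $K=\mathbb{Z}/2$, $A=B=K[x]/(x^2)$, $\phi=\Id_A$. Running through all pairs $f,g\in A=\{0,1,x,1+x\}$, the Leibniz generators reduce (modulo $B\langle 0\rangle+B\langle 1\rangle$) to the single element $x(\langle 1{+}x\rangle+\langle x\rangle)$, so $S=B\langle 0\rangle+B\langle 1\rangle+B\cdot x(\langle 1{+}x\rangle+\langle x\rangle)$; since $x^2=0$, the coefficient of $\langle x\rangle$ in any element of $S$ lies in $\{0,x\}$, so $\langle 1{+}x\rangle+\langle x\rangle\notin S$ and $d^R_\phi(1+x)\neq d^R_\phi(1)+d^R_\phi(x)$. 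Equivalently, the set map $D\colon A\to A$ with $D(0)=D(1)=D(x)=0$ and $D(1+x)=x$ satisfies the Leibniz rule and kills constants but is not additive; it factors through $\Omega^R_\phi$, so the relations in $S$ cannot be forcing additivity. Your derivation of $K$-homogeneity from Leibniz plus $\langle k\rangle\in S$ is correct; it is only additivity that fails.

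The paper's own proof has the same gap, only unacknowledged: it verifies that every $D\in\Der_K(\phi,M)$ factors uniquely as $F\circ d^R_\phi$ (with existence and uniqueness read off directly from the universal property of the quotient $B^{(A)}/S$, rather than via Proposition~\ref{prop_Der_UniFactor} as you do), but it never checks that $d^R_\phi$ itself lies in $\Der_K(\phi,\Omega^R_\phi)$, which is part of what ``universal $\phi$-derivation'' means. The standard repair---present for instance in the source the paper cites for this model---is to enlarge $S$ by the generators $\langle f+g\rangle-\langle f\rangle-\langle g\rangle$; once that is done, both your route through Proposition~\ref{prop_Der_UniFactor} and the paper's direct quotient argument go through without further difficulty.
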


\begin{proof}
Let $M$ be a $B$-module and let $D\in\Der_K(\phi,M)$. Consider the $B$-linear map $\gamma:B^{(A)}\to M$ defined on base elements by the formula
\begin{equation}
\gamma(\left\langle f \right\rangle)=D(f).
\end{equation}
Since $D$ is a $\phi$-derivation, it follows that $\gamma$ vanishes on $S$. Hence there is a unique $B$-linear map $F:\Omega^R_{\phi}\to M$ such that $D=F\circ  d^R_{\phi}$.
\end{proof}

\begin{definition*}
We will call $(\Omega^R_\phi,d^R_\phi)$ the \emph{rough universal $\phi$-derivation}.
\end{definition*}

The above construction is very similar to the construction of the tensor product. We consider a huge free module and divide it by a submodule generated by appropriately tailored elements. This rough approach explains our choice of the name. This model is also a modification of some model of the Kähler differential (see \cite[p. 384]{Eisenbud}). 

\bigskip
\noindent\textbf{Third model: Algebraic universal $\phi$-derivation}
\medskip

Let us consider $B\otimes_K A\otimes_K A$ and $B\otimes_K A$ with the natural $B$-module structures endowed by the left most factors. Let $h_\phi:B\otimes_K A\otimes_K A\to B\otimes_K A$ be a $B$-linear map defined by the formula
\begin{equation}
h_\phi(a\otimes f\otimes g) = a\otimes fg - a\phi(f)\otimes g - a\phi(g)\otimes f.
\end{equation}
We define the $B$-module $\Omega^A_{\phi}$ as the cokernel of $h_\phi$. This means that $\Omega^A_{\phi}=(B\otimes_K A)/ \Ima(h_\phi)$, and we let $d^A_{\phi}:A\to \Omega^A_{\phi}$ be defined by the formula
\begin{equation}
d^A_{\phi}(f)=\pi^A(1\otimes f),
\end{equation}
where $\pi^A:B\otimes_K A\to \Omega^A_{\phi}$ is the natural projection.

\begin{theorem}
$(\Omega^A_\phi,d^A_\phi)$ is a universal $\phi$-derivation.
\end{theorem}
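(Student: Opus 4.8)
The plan is to verify the hypotheses of Proposition \ref{prop_Der_UniFactor}, exactly as in the proofs for the classical and rough models. Concretely, I would establish three things in turn: that $d^A_\phi$ is a $\phi$-derivation, that every $\phi$-derivation factors (not necessarily uniquely) through $d^A_\phi$, and that $\Span_B(\Ima d^A_\phi) = \Omega^A_\phi$. Once all three hold, Proposition \ref{prop_Der_UniFactor} immediately yields that $(\Omega^A_\phi, d^A_\phi)$ is universal.

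For the first and third points I would read off everything from the definition of $h_\phi$. Since $h_\phi(1 \otimes f \otimes g) = 1 \otimes fg - \phi(f) \otimes g - \phi(g) \otimes f$ lies in $\Ima(h_\phi)$, applying $\pi^A$ gives $d^A_\phi(fg) = \phi(f) d^A_\phi(g) + \phi(g) d^A_\phi(f)$, while $K$-linearity of $d^A_\phi$ is clear from $K$-linearity of $f \mapsto 1 \otimes f$; hence $d^A_\phi \in \Der_K(\phi, \Omega^A_\phi)$. For the span condition, I would use that $B \otimes_K A$ is generated as a $B$-module by the elements $1 \otimes f$, because $a \otimes f = a \cdot (1 \otimes f)$. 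Applying the surjection $\pi^A$ then shows that $\Omega^A_\phi$ is $B$-spanned by the elements $d^A_\phi(f) = \pi^A(1 \otimes f)$, i.e. $\Span_B(\Ima d^A_\phi) = \Omega^A_\phi$.

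For the factorisation, given a $B$-module $M$ and $D \in \Der_K(\phi, M)$, I would define the $B$-linear map $\gamma: B \otimes_K A \to M$ by $\gamma(a \otimes f) = a D(f)$; this is well defined since $(a, f) \mapsto a D(f)$ is $K$-bilinear and so factors through $B \otimes_K A$. To descend $\gamma$ to the cokernel it suffices to check that $\gamma$ kills $\Ima(h_\phi)$, and since $\gamma$ and $h_\phi$ are both $B$-linear this reduces to the generators:
\begin{equation}
\gamma(h_\phi(a \otimes f \otimes g)) = a\bigl(D(fg) - \phi(f) D(g) - \phi(g) D(f)\bigr) = 0,
\end{equation}
the last equality because $D$ is a $\phi$-derivation. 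Thus $\gamma$ induces a $B$-linear map $F: \Omega^A_\phi \to M$ with $F \circ \pi^A = \gamma$, and then $F(d^A_\phi(f)) = \gamma(1 \otimes f) = D(f)$, so $F \circ d^A_\phi = D$.

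I do not expect a genuine obstacle here: the argument is structurally identical to the previous two models, and the only points requiring care are the well-definedness of $\gamma$ through the $K$-tensor product and the observation that checking vanishing on the $B$-generators $h_\phi(a \otimes f \otimes g)$ suffices because $\Ima(h_\phi)$ is a $B$-submodule. The mild conceptual payoff of this model, as for the rough model, is that existence of the factor $F$ comes essentially for free from the universal property of the cokernel, so that only the spanning condition of Proposition \ref{prop_Der_UniFactor} is genuinely doing the work of upgrading mere factorisation to universality.
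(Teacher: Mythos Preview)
Your proof is correct and follows essentially the same path as the paper: define $\gamma(a\otimes f)=aD(f)$, check that it annihilates $\Ima(h_\phi)$, and descend to the cokernel. The only minor difference is that you secure uniqueness via Proposition~\ref{prop_Der_UniFactor} and the span condition, whereas the paper claims uniqueness directly from the universal property of the cokernel; both work, and your version is slightly more explicit (you also verify that $d^A_\phi$ is a $\phi$-derivation, which the paper leaves implicit).
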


\begin{proof}
Let $M$ be a $B$-module and let $D\in\Der_K(\phi,M)$. Consider the $B$-linear map $\gamma:B\otimes_K A\to M$ defined by the formula
\begin{equation}
\gamma(a\otimes f)=aD(f).
\end{equation}
Then for any $a\in B$ and $f,g\in A$ we have that
\begin{equation}
\gamma(h_\phi(a\otimes f\otimes g))=aD(fg)-a\phi(f)D(g)-a\phi(g)D(f),
\end{equation}
and since $D$ is a $\phi$-derivation, we get that $\gamma\circ h_\phi = 0$. Thus there is a unique $B$-linear map $F:\Omega^A_{\phi}\to M$ such that $D=F\circ  d^A_{\phi}$.
\end{proof}

\begin{definition*}
We will call $(\Omega^A_\phi,d^A_\phi)$ the \emph{algebraic universal $\phi$-derivation}.
\end{definition*}

The above construction is just a modification of the model of the Kähler differential discovered in \cite[p. 74-79]{Brandenburg}.

\section{Examples of $\phi$-differentials}
\label{sec_Examples}

The aim of this section is to show that some differentials appearing in algebra and geometry are in fact $\phi$-differentials.

First of all, as we have seen earlier, the notion of $\phi$-differential is a straightforward generalisation of the notion of Kähler differential. More precisely, for a given $K$-algebra $A$ the Kähler differential of $A$ is the same as the $\Id_A$-differential. However, it is merely a consequence of definitions we introduced, rather then an actual example.

The case which is the most interesting for us is the one in which, using notations from previous sections, the $K$-algebra $B$ is just $K$ itself. This is due to the philosophy from \cite[ch. 9]{Nestruev}, which suggests to look at differential calculus as a part of commutative algebra. So here is a brief overview of the main concepts which will concern us the most. For more details we refer to \cite[ch. 9]{Nestruev}.

Given a $K$-algebra $A$, we denote the set of all $K$-algebra homomorphisms $h:A\to K$ by $|A|$. $|A|$ is called the \emph{$K$-spectrum of $A$} and its elements are called \emph{$K$-points}. For any $h\in|A|$ the \emph{cotangent space at $h$} is defined as $I_h/I_h^2$ and it is denoted by $T^*_h A$, where $I_h=\ker h$. Additionally, we allow ourselves to define the \emph{differential at $h$} as the map $d_h:A\to T^*_h A$ given by the formula $d_h(f)=[f-h(f)]$.

Since any $K$-point $h$ is a $K$-algebra homomorphism, it is reasonable to talk about $h$-derivations. Hence let us focus for a moment on how the classical universal $h$-derivation looks like. Since $h$ takes values in $K$, we have that $\Omega^C_h$ is precisely the cotangent space $T_h^* A$, and similarly the $h$-differential $d^C_h$ is just $d_h$. As a result, we obtain that $d_h$ is a $h$-differential.

\begin{prop}
\label{prop_Ex_Alg}
For any $K$-algebra $A$ and any $h\in|A|$ the pair $(T_h^* A, d_h)$ is a universal $h$-derivation.
\end{prop}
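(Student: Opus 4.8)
The plan is to identify $(T_h^* A, d_h)$ with the classical universal $\phi$-derivation $(\Omega^C_\phi, d^C_\phi)$ in the special case $B=K$ and $\phi=h$, and then to inherit universality from the already-proved universality of the classical model. Since that universality is known, it is enough to exhibit a $K$-module isomorphism $\Omega^C_h \to T_h^* A$ that carries $d^C_h$ to $d_h$, for such an isomorphism transports the universal factorization property verbatim.

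First I would specialize the classical construction to $B=K$, $\phi=h$, so that the ambient algebra $B\otimes_K A$ becomes $K\otimes_K A$, and I would invoke the canonical $K$-algebra isomorphism $\mu\colon K\otimes_K A\to A$ given by $\mu(a\otimes f)=af$. The key preliminary observation is that $\mu$ is simultaneously an isomorphism of $K$-modules and of $K$-algebras, hence it carries ideals to ideals and multiplicatively sends the square of an ideal to the square of its image.

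Next I would track each ingredient of the classical model through $\mu$. The homomorphism $p_\phi\colon K\otimes_K A\to K$, here $p_h(a\otimes f)=a\,h(f)$, satisfies $p_h=h\circ\mu$ because $h(af)=a\,h(f)$; consequently $\mu$ restricts to an isomorphism $I_\phi=\ker p_h\to\ker h=I_h$ and therefore to $I_\phi^2\to I_h^2$. It follows that $\mu$ descends to a $K$-module isomorphism $\Omega^C_h=I_\phi/I_\phi^2\to I_h/I_h^2=T_h^* A$. Finally $\delta_\phi(f)=1_K\otimes f-h(f)\otimes 1_A$ is sent by $\mu$ to $f-h(f)$, so after projecting to the quotient the element $d^C_h(f)=\pi^C(\delta_\phi(f))$ is carried to $[f-h(f)]=d_h(f)$. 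This is precisely the isomorphism intertwining $d^C_h$ and $d_h$, and universality passes across it.

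I do not expect a genuine obstacle here; the only point demanding care is the bookkeeping that $\mu$ is compatible at once with the $K$-module structure, the algebra multiplication, and hence with the kernel and its square, so that the induced map on $I_\phi/I_\phi^2$ is both well defined and bijective. Should one wish to bypass the classical model altogether, I would instead check directly that $d_h$ is an $h$-derivation and that $\Span_K(\Ima d_h)=T_h^* A$ --- indeed every class in $I_h/I_h^2$ equals $d_h(g)=[g]$ for a suitable $g\in I_h$ --- and then factor an arbitrary $h$-derivation $D$ through $d_h$ via $[g]\mapsto D(g)$, which is well defined since $D$ vanishes on $I_h^2$ by the Leibniz rule; Proposition \ref{prop_Der_UniFactor} then concludes. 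The identification route is shorter, so I would favor it.
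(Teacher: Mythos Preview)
Your proposal is correct and follows essentially the same route as the paper: the paper's argument (given in the paragraph preceding the proposition rather than in a formal proof environment) is simply to specialize the classical model to $B=K$, $\phi=h$ and observe that under the canonical identification $K\otimes_K A\cong A$ one has $\Omega^C_h=T_h^*A$ and $d^C_h=d_h$. You have merely spelled out this identification via $\mu$ more carefully than the paper does.
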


The above algebraic framework is general enough to work with various cases in geometry. We will show that differentials appearing in both algebraic and differential geometry are $\phi$-differentials for $\phi$ being evaluations at a point. 

Let $\mathbb{F}$ be a field and assume that $V$ is an affine variety given by an ideal $I\subset \mathbb{F}[X_1,\dots,X_n]$. Let $\mathbb{F}[V]=\mathbb{F}[X_1,\dots,X_n]/I$ be the associated ring of regular functions on $V$. For a given point $z\in V$ let $\ev_z:\mathbb{F}[V]\to\mathbb{F}$ denote the evaluation function at $z$. By so, we mean that $\ev_z(\overline{f})=f(z)$, where $f\in\mathbb{F}[X_1,\dots,X_n]$ is any representative of $\overline{f}\in\mathbb{F}[V]$. The \emph{Zariski cotangent space at $z$} is defined as $\mathfrak{m}_z/\mathfrak{m}_z^2$, where $\mathfrak{m}_z$ is the maximal ideal of regular functions vanishing at $z$ and it is denoted by $T_z^*V$. Equivalently, $T_z^*V=I_z/I_z^2$, where $I_z=\ker(\ev_z)$. Finally, the \emph{differential at $z$} is the map $d_z:\mathbb{F}[V]\to T^*_z V$ given by the formula $d_z(\overline{f})=[\overline{f}-\ev_z(\overline{f})]$. 

Since $\ev_z$ is clearly an $\mathbb{F}$-algebra homomorphism, we get that this geometric situation is a special case of the previous algebraic one with $K=\mathbb{F}$, $A=\mathbb{F}[V]$ and $h=\ev_z$. Hence the differential $d_z$ is an $\ev_z$-differential.

\begin{prop}
\label{prop_Ex_AG}
For any affine variety $V$ over a field $\mathbb{F}$ and any point $z\in V$ the pair $(T_z^* V, d_z)$ is a universal $\ev_z$-derivation.
\end{prop}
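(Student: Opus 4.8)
The plan is to recognise this proposition as a direct specialisation of the purely algebraic Proposition~\ref{prop_Ex_Alg}. The text preceding the statement already pins down the relevant substitution: take $K=\mathbb{F}$, $A=\mathbb{F}[V]$, and $h=\ev_z$. Since $\ev_z:\mathbb{F}[V]\to\mathbb{F}$ is an $\mathbb{F}$-algebra homomorphism, it is an $\mathbb{F}$-point, that is $\ev_z\in|\mathbb{F}[V]|$, so the hypotheses of Proposition~\ref{prop_Ex_Alg} are met. It then remains only to check that the geometric objects $(T_z^*V,d_z)$ coincide, on the nose, with the algebraic objects $(T_h^*A,d_h)$ attached to the $\mathbb{F}$-point $h=\ev_z$.

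First I would match the underlying modules. By definition the Zariski cotangent space is $T_z^*V=\mathfrak{m}_z/\mathfrak{m}_z^2$, where $\mathfrak{m}_z$ is the ideal of regular functions vanishing at $z$. A regular function vanishes at $z$ precisely when its value under $\ev_z$ is $0$, so $\mathfrak{m}_z=\ker(\ev_z)=I_z$. Hence $T_z^*V=I_z/I_z^2$, which is exactly the cotangent space at the $\mathbb{F}$-point $\ev_z$ in the sense of the abstract algebraic framework. Next I would match the differentials: the formula $d_z(\overline{f})=[\overline{f}-\ev_z(\overline{f})]$ agrees verbatim with the algebraic formula $d_h(f)=[f-h(f)]$ once we put $h=\ev_z$ and $f=\overline{f}$.

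With both identifications in place, $(T_z^*V,d_z)$ is literally the pair $(T_{\ev_z}^*\mathbb{F}[V],d_{\ev_z})$, and Proposition~\ref{prop_Ex_Alg} applied to $A=\mathbb{F}[V]$ and $h=\ev_z$ asserts that this pair is a universal $\ev_z$-derivation, which is the claim.

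The proof carries essentially no obstacle: the entire substance of the universality was already established in Proposition~\ref{prop_Ex_Alg} (which in turn rests on the classical model of Section~\ref{sec_Models}). The only genuine verification is the bookkeeping identification $\mathfrak{m}_z=\ker(\ev_z)$ together with the compatibility of the $\mathbb{F}$-algebra structures, both of which are immediate. If anything is to be watched, it is simply to confirm that the \emph{evaluation at a point} of algebraic geometry is the very same $\mathbb{F}$-algebra homomorphism as the \emph{$K$-point} of the abstract theory, so that the two notions of cotangent space and of differential are not merely analogous but identical.
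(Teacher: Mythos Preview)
Your proposal is correct and follows essentially the same approach as the paper: the paper's argument, given in the paragraph immediately preceding the proposition, is precisely to observe that $\ev_z$ is an $\mathbb{F}$-algebra homomorphism and that under the substitution $K=\mathbb{F}$, $A=\mathbb{F}[V]$, $h=\ev_z$ the geometric pair $(T_z^*V,d_z)$ coincides with the algebraic pair $(T_h^*A,d_h)$, so Proposition~\ref{prop_Ex_Alg} applies. Your write-up merely spells out the identification $\mathfrak{m}_z=\ker(\ev_z)$ a bit more explicitly than the paper does.
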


A similar result holds for ordinary differentials defined on smooth manifolds (by smooth we mean $C^\infty$). Let $X$ be a smooth real Hausdorff manifold and let $z\in X$ be some point. Assume that $\ev_z:C^\infty(M)\to\mathbb{R}$ is the standard evaluation map at $z$. \emph{Tangent vectors at $z$} are defined as $\ev_z$-derivations with values in $\mathbb{R}$ and the vector space of all tangent vectors at $z$ is denoted by $T_zX$. The \emph{cotangent space at $z$} is the dual space to $T_zX$ and it is denoted by $T^*_zX$. The \emph{differential at $z$} is the map $d_z:C^\infty(M)\to T^*_zX$ given by the formula $d_z(f)(v)=v(f)$. 

Straight from the definition of $T_zX$ we have that $T_zX = \Der_\mathbb{R}(\ev_z,\mathbb{R})$. As a result, the differential at $z$ is precisely the map $d^\mathbb{R}_{\ev_z}:C^\infty(X)\to\Der_\mathbb{R}(\ev_z,\mathbb{R})^{\vee}$ introduced in \eqref{eq_AlgDiff} with $A=C^\infty(X)$, $M=B=K=\mathbb{R}$ and $\phi=\ev_z$. So in order to prove that $d_z$ is an $\ev_z$-differential, from Corollary \ref{cor_Der_BidualDiff}, it is enough to prove that the module of $\ev_z$-differentials $\Omega_{\ev_z}$ is a finite dimensional real vector space. This clearly holds. In fact, from the universal property of the universal $\ev_z$-derivation we have that the dual vector space to $\Omega_{\ev_z}$ is precisely the tangent space $T_zX$, which has a finite dimension.

\begin{prop}
\label{prop_Ex_DG}
For any smooth real Hausdorff manifold $X$ and any point $z\in X$ the pair $(T_z^*X,d_z)$ is a universal $\ev_z$-derivation.
\end{prop}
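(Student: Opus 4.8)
The plan is to reduce the statement entirely to Corollary \ref{cor_Der_BidualDiff}, by first identifying $d_z$ with the bidual differential $d^\mathbb{R}_{\ev_z}$ and then verifying its finite-dimensionality hypothesis. First I would confirm the identification already prepared by the construction: since tangent vectors at $z$ are \emph{defined} as the $\ev_z$-derivations with values in $\mathbb{R}$, we have $T_zX = \Der_\mathbb{R}(\ev_z,\mathbb{R})$, and hence $T_z^*X = \Der_\mathbb{R}(\ev_z,\mathbb{R})^\vee$. The defining formula $d_z(f)(v) = v(f)$ then coincides verbatim with formula \eqref{eq_AlgDiff} for $d^\mathbb{R}_{\ev_z}$ in the special case $A = C^\infty(X)$, $M = B = K = \mathbb{R}$ and $\phi = \ev_z$. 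So proving the proposition amounts to showing that $d^\mathbb{R}_{\ev_z}$ is an $\ev_z$-differential.

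By Corollary \ref{cor_Der_BidualDiff}, it suffices to check that the module of $\ev_z$-differentials $\Omega_{\ev_z}$ is a finite-dimensional real vector space. I would establish this through the universal property: the isomorphism $\alpha_\mathbb{R}$ from \eqref{eq_alpha} identifies $\Omega_{\ev_z}^\vee$ with $\Der_\mathbb{R}(\ev_z,\mathbb{R}) = T_zX$. Thus $\Omega_{\ev_z}^\vee$ is finite-dimensional, and since a vector space whose dual is finite-dimensional must itself be finite-dimensional (indeed of the same dimension, as the natural map to the bidual is then injective into a finite-dimensional space), $\Omega_{\ev_z}$ is finite-dimensional as well. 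I would stress that this argument is \emph{not} circular: the finite-dimensionality of $T_zX$ is supplied independently from geometry, and it is used to deduce finite-dimensionality of $\Omega_{\ev_z}$, not the other way around.

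The hard part — in fact the only genuinely analytic input in the whole proof — is precisely the claim that $T_zX = \Der_\mathbb{R}(\ev_z,\mathbb{R})$ is finite-dimensional, with $\dim T_zX = \dim X$. This is the classical computation of the tangent space of a smooth manifold, and it is where the smooth structure (as opposed to the purely algebraic formalism) is genuinely used. Its proof proceeds by first showing that any $\ev_z$-derivation is \emph{local}, i.e.\ vanishes on functions that vanish in a neighbourhood of $z$ (using bump functions), so that $v$ depends only on the germ of $f$ at $z$; one then passes to a coordinate chart and invokes Hadamard's lemma, writing every smooth $f$ near $z$ as $f(x) = f(z) + \sum_i (x_i - z_i)\,g_i(x)$ with $g_i$ smooth, from which it follows that $v$ is determined by its values $v(x_i)$ on the $n$ coordinate functions. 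Consequently the operators $\partial/\partial x_i|_z$ form a basis of $T_zX$. I would cite this standard fact rather than reproduce its proof. Once it is in hand, the chain of reductions above closes and the proposition follows.
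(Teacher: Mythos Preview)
Your proposal is correct and follows exactly the paper's route: identify $(T_z^*X,d_z)$ with $(\Der_\mathbb{R}(\ev_z,\mathbb{R})^\vee,d^\mathbb{R}_{\ev_z})$, invoke Corollary~\ref{cor_Der_BidualDiff}, and verify its hypothesis by observing that $\Omega_{\ev_z}^\vee\cong T_zX$ is finite-dimensional. You supply two details the paper leaves implicit --- the elementary fact that a vector space with finite-dimensional dual is itself finite-dimensional, and a sketch (via locality and Hadamard's lemma) of why $\dim T_zX=\dim X$ --- but the underlying argument is identical.
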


Since universal $\phi$-derivations are unique up to an isomorphism commuting with differentials, we get from Section \ref{sec_Models} that $(T_z^*X,d_z)$ has alternative descriptions. From the construction of the classical universal $\ev_z$-derivation, we have that $T_z^*X$ can be seen as $I_z/I_z^2$, where $I_z=\ker(\ev_z)$ with $d_z$ mapping a smooth function $f$ to the the class $[f-f(z)]$ in $I_z/I_z^2$. As a result, which was noticed in \cite[sec. 9.27]{Nestruev}, this geometric case can be seen as a special case of the previous algebraic one with $K=\mathbb{R}$, $A=C^\infty(X)$ and $h=\ev_z$. 

From the construction of the algebraic universal $\ev_z$-derivation, we get that $T_z^*X$ is isomorphic with $C^\infty(M)/S_z$, where $S_z$ is a vector space generated by elements of the form $fg-f(z)g-g(z)f$. Interestingly, it coincides with the space of differentials, introduced in \cite[sec. 2.1.9]{Narasimhan}, which is defined as the space of smooth functions modulo stationary functions at the point $z$.

In summary, even though the notion of $h$-differential is defined via the universal property, it generalises differentials from distinct categories which have very concrete set theoretic models. Additionally, being defined axiomatically and model independent has some practical applications. For example, in order to determine whether some $K$-linear map from a $K$-algebra $A$ factors through some $h$-differential, it is enough to check if it is a $h$-derivation. We will make use of this observation in upcoming Section \ref{sec_Elliptic} to define the principal symbol at a $K$-point in a purely algebraic way.

\section{Ellipticity in differential calculus over commutative algebras}
\label{sec_Elliptic}

Let us go back to the algebraic framework from the first section. So let $A,B$ be two $K$-algebras over some fixed ring $K$. For any $A$-module $P$ and any $B$-module $M$ there is a natural associative action of the $K$-algebra $B\otimes_K A$ on $\Hom_K(P,M)$ given by the formula
\begin{equation}
((a\otimes f)F)(p)=aF(fp),
\end{equation}
where $a\in B$, $f\in A$, $F\in\Hom_K(P,M)$ and $p\in P$. From the construction of the classical universal $\phi$-derivation we have that for a given $K$-algebra homomorphism $\phi:A\to B$ there is the $K$-linear map $\delta_\phi:A\to B\otimes_K A$ defined by the formula
\begin{equation}
\delta_\phi(f)=1_B\otimes f - \phi(f)\otimes 1_A.
\end{equation}
So, in particular, for $F\in\Hom_K(P,M)$, $f\in A$ and $p\in P$ we have that
\begin{equation}
(\delta_\phi(f)F)(p) = F(fp) - \phi(f)F(p).
\end{equation}
If $A=B$ and $\phi=\Id_A$, then we will denote $\delta_\phi$ by $\delta$. 

Using $\delta$, just like it is done in \cite[sec. 9.67]{Nestruev}, we can define a linear differential operator between two modules over a commutative algebra.

\begin{definition*}
Let $A$ be an arbitrary $K$-algebra and let $P,Q$ be two $A$-modules. A $K$-linear map $L:P\to Q$ is called a \emph{linear differential operator (LDO) of order $\leqslant k$} if for any $f_0,\dots,f_k\in A$
\begin{equation}
\delta (f_0)\dots\delta (f_k) L = 0.
\end{equation}
Naturally, if $L$ is of order $\leqslant k$ but not of order $\leqslant k-1$, then we will say that \emph{$L$ is of order $k$}.
\end{definition*}

This algebraic definition coincides with the usual definition of linear differential operators acting between global sections of smooth vector bundles on a smooth manifold \cite[sec. 9.66]{Nestruev}. Similarly, in the case of the algebra of polynomials $\mathbb{F}[X_1,\dots,X_n]$ over a field $\mathbb{F}$ of characteristics $0$, we have from \cite[sec. 16.11]{Grothendieck} that LDOs in the usual sense acting on $\mathbb{F}[X_1,\dots,X_n]$ coincide with the above algebraic ones with $P=Q=\mathbb{F}[X_1,\dots,X_n]$. Hence the above algebraic notion of LDO is coherent with the standard notion of LDO, and so it does not lead to any ambiguity.

Let us go back to the smooth case. If $X$ is a smooth manifold and $L:\Gamma(E)\to\Gamma(F)$ is an LDO of order $k$ on $X$ acting between smooth sections of some vector bundles $E$ and $F$, then we can use the principal symbol to study the character of $L$. The \emph{principal symbol at $z\in X$ of $L$} may be defined intrinsically as the function $\sigma_{L,z}:T^*_z X\otimes_\mathbb{R} E_z\to F_z$ given by the formula
\begin{equation}
\sigma_{L,z}(\alpha\otimes \xi) = L_z(f^k s),
\end{equation}
where $f\in C^\infty(M)$ and $s\in\Gamma(E)$ are such that $d_zf=\alpha$, $f(z)=0$ and $s(z)=\xi$ \cite[def. 3.3.13]{Narasimhan}. If for any non-zero $\alpha\in T^*_zM$ the $\mathbb{R}$-linear map $\sigma_{L,z}(\alpha):E_z\to F_z$ is injective, then we say that \emph{$L$ is elliptic at $z$}.

We wish to generalise the notions of principal symbol and ellipticity to the case of linear differential operators acting between modules over an arbitrary commutative $K$-algebra $A$.

We know from \cite[sec. 1.5.3]{KrLyVi} that with every LDO $L:P\to Q$ of order $\leqslant k$ we can associate an $A$-linear map $\sigma(L):(\Omega_{A/K})^{\odot k}\otimes_A P\to Q$ such that
\begin{equation}
\sigma(L)((d_{A/K}(f_1)\odot\dots\odot d_{A/K}(f_k))\otimes p) = \left(\delta(f_1)\dots\delta(f_k)L\right)(p),
\end{equation}
where $f_1,\dots,f_k\in A$, $p\in P$, and where $(\Omega_{A/K})^{\odot k}$ denotes the $k$-th symmetric power of the $A$-module $\Omega_{A/K}$. When $A=C^\infty(M)$ and $P,Q$ are modules of sections of some vector bundles, then this map $\sigma(L)$ corresponds to the global section of the principal symbol treated as a bundle map. Since the functor of global section $\Gamma$ behaves neatly on manifolds, in this geometric case it is quite easy to recover $\sigma_{L,z}$ for any $z$ from $\sigma(L)$ (just apply \cite[thm 1]{Swan}). However, in the case of an arbitrary algebra $A$ and $A$-modules $P,Q$ a more subtle approach is required.

First we notice that the principal symbol at a point is just a map between fibres of vector bundles. So in order to generalise this notion to our algebraic case, we need to have an algebraic analogue of points for an algebra and fibres for a module. 

We know from \cite[sec. 7.3]{Nestruev} that on a paracompact manifold $X$ ordinary points corresponds bijectively to $\mathbb{R}$-points of the algebra $C^\infty(X)$. More precisely, every $\mathbb{R}$-algebra homomorphism from $C^\infty(X)$ to $\mathbb{R}$ must be of the form $\ev_z$ for some $z\in X$. Hence given a $K$-algebra $A$ and an LDO acting between $A$-modules we are going to define the principal symbol at $h$ for any $K$-point $h\in|A|$. 

We know from \cite[cor. 3]{Swan} that for any smooth vector bundle $E$ on a manifold $X$ and any $z\in X$ the fibre $E_x$ is isomorphic with $\Gamma(E)/I_z\Gamma(E)$, where $I_z=\ker(\ev_z)$. As a result, repeating the ideas from \cite[cor. 11.9]{Nestruev}, we obtain a natural way to define an analogue of fibres at $K$-points for an arbitrary $A$-module. More specifically, for any $A$-module $P$ and any $K$-point $h\in |A|$ let $P_h$ be the quotient $K$-module $P/I_hP$, where $I_h$ is the kernel of $h$. We will denote the natural projection $P\to P_h$ by $\ev_h$, and for $p\in P$ we will denote the value $\ev_h(p)$ by $p_h$. Similarly, for any $K$-linear map $F:P\to Q$ we will denote the composition $\ev_h\circ F:P\to Q_h$ by $F_h$. In the special case of $P=A$ we have that $A_h=A/I_h$. As a result, $A_h$ can be canonically identified with $K$ via $h$. Thus, using this identification, the map $\ev_h:A\to A_h$ is just $h$ itself.

\begin{lemma}
\label{lem_El_Commute}
Let $P,Q$ be two $A$-modules, $h\in |A|$ and let $F:P\to Q$ be a $K$-linear map. Then for any natural number $j$ and any $f_1,\dots,f_j\in A$
\begin{equation}
\label{eq_induction}
\ev_h\circ\delta (f_1)\dots \delta (f_j) F = \delta_h (f_1)\dots \delta_h (f_j)F_h.
\end{equation}
\end{lemma}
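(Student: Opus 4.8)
The plan is to induct on $j$, peeling one factor $\delta(f_1)$ off the left at each step. The whole argument rests on a single compatibility property of $\ev_h$, which I would establish first: for every $a\in A$ and $q\in Q$ one has $\ev_h(aq)=h(a)\,\ev_h(q)$. Indeed $a-h(a)1_A$ lies in $I_h=\ker h$, so $(a-h(a)1_A)q\in I_hQ$ and hence is annihilated by the projection $\ev_h\colon Q\to Q_h=Q/I_hQ$; subtracting then gives $\ev_h(aq)=\ev_h(h(a)q)=h(a)\ev_h(q)$, the last step being $K$-linearity of $\ev_h$. In other words, $\ev_h$ is a map of $A$-modules once $Q_h$ is given the $A$-action through $h$, and this is exactly what converts the $A\otimes_K A$-action appearing on the left of \eqref{eq_induction} into the $K\otimes_K A$-action appearing on the right.

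For the base case $j=0$ the claimed identity reads $\ev_h\circ F=F_h$, which holds by the very definition $F_h=\ev_h\circ F$. (If natural numbers are taken to start at $1$, the case $j=1$ is the one-line check $(\ev_h\circ\delta(f_1)F)(p)=\ev_h\bigl(F(f_1p)-f_1F(p)\bigr)=F_h(f_1p)-h(f_1)F_h(p)=(\delta_h(f_1)F_h)(p)$, using the observation above together with the action formulas $(\delta(f)F)(p)=F(fp)-fF(p)$ and $(\delta_h(f)F_h)(p)=F_h(fp)-h(f)F_h(p)$.)

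For the inductive step I would assume \eqref{eq_induction} for $j$ factors and write $G=\delta(f_2)\cdots\delta(f_{j+1})F\colon P\to Q$. Since the action of $B\otimes_K A$ is associative, the left-hand side for $j+1$ factors is $\ev_h\circ\delta(f_1)G$, and evaluating at $p\in P$ gives
\[
(\ev_h\circ\delta(f_1)G)(p)=\ev_h\bigl(G(f_1p)\bigr)-\ev_h\bigl(f_1G(p)\bigr)=(\ev_h\circ G)(f_1p)-h(f_1)(\ev_h\circ G)(p),
\]
where the second equality uses the compatibility property. The inductive hypothesis identifies $\ev_h\circ G$ with $G'=\delta_h(f_2)\cdots\delta_h(f_{j+1})F_h\colon P\to Q_h$, so the expression becomes $G'(f_1p)-h(f_1)G'(p)$, which is precisely $(\delta_h(f_1)G')(p)$ by the action formula for $\delta_h$. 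As $\delta_h(f_1)G'=\delta_h(f_1)\cdots\delta_h(f_{j+1})F_h$, this is the right-hand side of \eqref{eq_induction} for $j+1$ factors, completing the induction.

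I do not expect a genuine obstacle here: the content is almost entirely the bookkeeping of two different module actions linked by $\ev_h$. The one place that demands care is keeping the domains and codomains straight — the left-hand operators live in $\Hom_K(P,Q)$ under the $A\otimes_K A$-action while the right-hand ones live in $\Hom_K(P,Q_h)$ under the $K\otimes_K A$-action — and verifying that the single identity $\ev_h(aq)=h(a)\ev_h(q)$ is exactly the bridge making the two sides correspond at each peeling step.
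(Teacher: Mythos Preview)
Your proposal is correct and follows essentially the same inductive argument as the paper: both proofs peel off the leftmost $\delta(f)$ and appeal to the single-factor identity $\ev_h\circ(\delta(f)G)=\delta_h(f)G_h$ together with the inductive hypothesis. The only difference is cosmetic: you isolate and prove the compatibility $\ev_h(aq)=h(a)\ev_h(q)$ up front, whereas the paper uses this fact implicitly inside its $j=1$ computation.
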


\begin{proof}
We will show it inductively. First, for any $f\in A$ we have that
\begin{equation}
\label{eq_El_LemmaCommute}
\ev_h\circ (\delta (f) F) = \delta_h(f) F_h.
\end{equation}
In fact, for any $p\in P$ we have that
\begin{equation}
\begin{split}
\ev_h((\delta (f) F)(p)) &= \ev_h(F(fp)-fF(p))=F_h(fp)-h(f)F_h(p) \\
&= (\delta_h(f) F_h)(p).
\end{split}
\end{equation}
Now fix $f_0,\dots,f_j\in A$ and put $\Phi = \delta (f_1)\dots \delta (f_j) F$. Then we have that
\begin{equation}
\begin{split}
\ev_h \circ (\delta (f_0)\delta (f_1)\dots \delta (f_j) F)&=\ev_h\circ (\delta (f_0)\Phi)\\
\comment{\eqref{eq_El_LemmaCommute}\textnormal{ with }f=f_0\textnormal{ and } F=\Phi}&= \delta_h(f_0)\Phi_h\\
\comment{\ev_h\circ\Phi = \Phi_h}&= \delta_h(f_0)(\ev_h\circ\Phi)\\
\comment{\textnormal{inductive assumption }\eqref{eq_induction}}&= \delta_h(f_0)\delta_h(f_1)\dots\delta_h(f_j)F_h.
\end{split}
\end{equation}
\end{proof}

In the realm of smooth manifolds, given an $\mathbb{R}$-linear map between sections of vector bundles, we can ask whether this map gives rise to a linear map on fibres. Such mappings are precisely linear maps over the ring of smooth functions, which are usually called tensors. The following lemma is an algebraic analogue of that fact, but reduced to a single fibre.

\begin{lemma}
\label{lem_El_TensorLike}
Let $P$ be an $A$-module, $V$ be a $K$-module and let $h\in |A|$. If $G:P\to V$ is $K$-linear and for every $f\in A$ we have that $\delta_h (f) G=0$, then the map $G^h:P_h\to V$ given by the formula
\begin{equation}
G^h(p_h)=G(p)
\end{equation}
is well defined.
\end{lemma}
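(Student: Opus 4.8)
The plan is to reduce the well-definedness of $G^h$ to the single statement that $G$ annihilates the submodule $I_h P$, which is precisely the kernel of the projection $\ev_h : P \to P_h = P/I_h P$. Since $p_h = p'_h$ holds exactly when $p - p' \in I_h P$, and since $G$ is $K$-linear, the assignment $G^h(p_h) = G(p)$ will be independent of the chosen representative as soon as we know that $G(I_h P) = 0$. So the whole lemma comes down to establishing $G|_{I_h P} = 0$.

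First I would unpack the hypothesis $\delta_h(f) G = 0$ using the action of $B \otimes_K A$ on $\Hom_K(P,V)$ (here $B = K$, since $h : A \to K$) together with the defining formula $\delta_h(f) = 1_K \otimes f - h(f) \otimes 1_A$. For every $p \in P$ this reads
\begin{equation}
0 = (\delta_h(f) G)(p) = G(fp) - h(f) G(p),
\end{equation}
so the hypothesis is equivalent to the identity $G(fp) = h(f) G(p)$ holding for all $f \in A$ and all $p \in P$.

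Next I would specialise this identity to $f \in I_h = \ker h$. For such $f$ we have $h(f) = 0$, so the identity collapses to $G(fp) = 0$ for every $p \in P$. Since $I_h P$ is generated as a $K$-module by the elements $fp$ with $f \in I_h$ and $p \in P$, and since $G$ is $K$-linear, it follows that $G$ vanishes on all of $I_h P$.

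Finally I would conclude: if $p_h = p'_h$, then $p - p' \in I_h P$, whence $G(p) - G(p') = G(p - p') = 0$, so $G(p) = G(p')$ and $G^h$ is well defined. I do not expect a genuine obstacle in this argument; the only step requiring any care is the correct translation of $\delta_h(f) G$ via the $B \otimes_K A$-action into the pointwise identity $G(fp) = h(f)G(p)$, after which the conclusion is an immediate specialisation to the kernel of $h$.
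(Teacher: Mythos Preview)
Your proof is correct and follows essentially the same route as the paper: both unpack $\delta_h(f)G=0$ into the pointwise identity $G(fp)=h(f)G(p)$, then specialise to $f\in I_h$ to conclude that $G$ kills $I_hP$. Your version simply spells out the reduction to $G|_{I_hP}=0$ and the generation of $I_hP$ by products $fp$ more explicitly than the paper does.
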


\begin{proof}
Let $p=f_0p_0$ for some $f_0\in I_h$ and $p_0\in P.$ Then
\begin{equation}
0=(\delta_h (f_0) G)(p_0) = G(f_0p_0) - h(f_0)G(p_0).
\end{equation}
Since $f_0\in I_h$, we obtain that $h(f_0)=0$. As a result, $G(f_0p_0)=0$.
\end{proof}

Now we are able to show the existence of the principal symbol at a $K$-point of an arbitrary linear differential operator.

\begin{theorem}
\label{thm_El_Symbol}
Let $P,Q$ be two $A$-modules and let $L:P\to Q$ be an LDO of order $\leqslant k$. Then for any $h\in |A|$ there exists the unique $K$-linear map
\begin{equation}
\sigma_{L,h}:\left(\Omega_h\right)^{\odot k}\otimes_K P_h\to Q_h
\end{equation}
such that for any $f\in I_h$ and any $p\in P$
\begin{equation}
\sigma_{L,h}((d_h(f))^{\odot k}\otimes p_h) = L_h(f^k p),
\end{equation}
where $\left(\Omega_h\right)^{\odot k}$ stands for the $k$-th symmetric power of the $K$-module $\Omega_h$.
\end{theorem}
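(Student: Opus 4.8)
The plan is to realise $\sigma_{L,h}$ as the map induced, through the universal properties of the symmetric power and of the tensor product over $K$, by a symmetric $K$-multilinear map. First I would define
\[
\Psi(f_1,\dots,f_k,p)=L_h(f_1\cdots f_k\,p)=\ev_h\bigl(L(f_1\cdots f_k\,p)\bigr)
\]
for $f_1,\dots,f_k\in I_h$ and $p\in P$. Because $A$ is commutative and $\ev_h\circ L$ is $K$-linear, this $\Psi$ is manifestly symmetric in the arguments $f_1,\dots,f_k$ and $K$-linear in each of its $k+1$ arguments; note also that $d_h$ restricted to $I_h$ is exactly the quotient map onto $\Omega_h=I_h/I_h^2$, so every element of $\Omega_h$ is $d_h(f)$ for some $f\in I_h$. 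All the work then lies in showing that $\Psi$ descends along the quotient maps $I_h\to\Omega_h$ and $P\to P_h=P/I_hP$, that is, that $\Psi$ vanishes whenever some $f_i\in I_h^2$ or $p\in I_hP$.

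The crucial observation, and the step I expect to be the main obstacle, is that both vanishing requirements collapse to the single fact $\ev_h\bigl(L(I_h^{k+1}P)\bigr)=0$. Indeed, if $f_1\in I_h^2$ while $f_2,\dots,f_k\in I_h$ then $f_1\cdots f_k\in I_h^{k+1}$, and if $p=f_0p_0$ with $f_0\in I_h$ then $f_1\cdots f_k\,p\in I_h^{k+1}P$; in either case $\Psi$ is the $\ev_h$-image of $L$ evaluated on an element of $I_h^{k+1}P$. To prove this fact I would take a generator $g_0\cdots g_k\,p_0$ of $I_h^{k+1}P$ with all $g_i\in I_h$. Since $L$ has order $\leqslant k$ we have $\delta(g_0)\cdots\delta(g_k)L=0$, so Lemma~\ref{lem_El_Commute} gives $\delta_h(g_0)\cdots\delta_h(g_k)L_h=\ev_h\circ\bigl(\delta(g_0)\cdots\delta(g_k)L\bigr)=0$. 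Now for $g\in I_h$ the operator $\delta_h(g)$ acts on any $K$-linear $G\colon P\to Q_h$ by $(\delta_h(g)G)(p)=G(gp)-h(g)G(p)=G(gp)$, because $h(g)=0$; iterating this identity down the product yields $\bigl(\delta_h(g_0)\cdots\delta_h(g_k)L_h\bigr)(p_0)=L_h(g_0\cdots g_k\,p_0)$. Combining the two displays gives $\ev_h\bigl(L(g_0\cdots g_k\,p_0)\bigr)=0$, and hence $\ev_h\bigl(L(I_h^{k+1}P)\bigr)=0$ by $K$-linearity. This is the only place where the hypothesis that $L$ is an LDO of order $\leqslant k$ enters.

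With the descent established, the induced symmetric $K$-multilinear map on $\Omega_h^{\times k}\times P_h$ factors, by the universal property of $(\Omega_h)^{\odot k}$ followed by that of $\otimes_K$, through a unique $K$-linear map $\sigma_{L,h}\colon(\Omega_h)^{\odot k}\otimes_K P_h\to Q_h$ satisfying
\[
\sigma_{L,h}\bigl(d_h(f_1)\odot\cdots\odot d_h(f_k)\otimes p_h\bigr)=L_h(f_1\cdots f_k\,p).
\]
Specialising to $f_1=\cdots=f_k=f$ recovers exactly the asserted value $\sigma_{L,h}\bigl((d_h(f))^{\odot k}\otimes p_h\bigr)=L_h(f^k p)$. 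For uniqueness, any two $K$-linear maps obeying the stated condition agree on every element $(d_h(f))^{\odot k}\otimes p_h$; since $d_h$ sends $I_h$ onto a generating set of $\Omega_h$ and pure $k$-th powers generate a symmetric power (for instance by polarisation when $k!$ is invertible in $K$, which covers the characteristic-zero situations of the applications), these elements generate $(\Omega_h)^{\odot k}\otimes_K P_h$, and the two maps coincide.
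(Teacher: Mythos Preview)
Your existence argument is correct and takes a genuinely different, more concrete route than the paper. The paper lets $f_1,\dots,f_k$ range over all of $A$: it first applies Lemma~\ref{lem_El_TensorLike} to pass from $P$ to $P_h$, then shows via Lemma~\ref{lem_Mod_Delta} that the resulting assignment $A^{\otimes k}\to\Hom_K(P_h,Q_h)$ is an $h$-derivation in each slot, and finally invokes the universal property of $(\Omega_h,d_h)$ to factor through $(\Omega_h)^{\odot k}$. You instead fix the classical model $\Omega_h=I_h/I_h^2$, restrict at once to $f_i\in I_h$, and collapse both descent conditions (through $I_h^2$ in each slot and through $I_hP$ in the last) to the single vanishing $L_h(I_h^{k+1}P)=0$, which you prove with Lemma~\ref{lem_El_Commute} together with the observation that $\delta_h(g)$ for $g\in I_h$ is just precomposition by multiplication by $g$. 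Your path is shorter and avoids the derivation-in-each-slot computation; the paper's path has the advantage of producing along the way the formula $\sigma_{L,h}(d_h(f_1)\odot\cdots\odot d_h(f_k)\otimes p_h)=(\delta_h(f_1)\cdots\delta_h(f_k)L_h)(p)$ for arbitrary $f_i\in A$, which is the form actually used in Section~\ref{sec_AffVar}.

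Your uniqueness paragraph has a soft spot you yourself flag: polarisation needs $k!$ invertible in $K$, while the theorem is stated over an arbitrary ring. You can avoid this entirely. Your own construction already yields the mixed formula $\sigma_{L,h}(d_h(f_1)\odot\cdots\odot d_h(f_k)\otimes p_h)=L_h(f_1\cdots f_k\,p)$ for $f_i\in I_h$, and since $d_h|_{I_h}$ surjects onto $\Omega_h$ the mixed products $d_h(f_1)\odot\cdots\odot d_h(f_k)$ generate $(\Omega_h)^{\odot k}$ over any $K$; this is precisely how the paper argues uniqueness (citing Proposition~\ref{prop_Der_UniFactor}). Strictly speaking, uniqueness subject only to the \emph{pure-power} condition as literally stated would require pure $k$-th powers to generate the symmetric power, a point neither you nor the paper fully resolves for general $K$; what both arguments actually establish is uniqueness relative to the mixed formula, and you should phrase yours that way.
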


\begin{proof}
Fix $f_1,\dots ,f_k \in A$ and consider the $K$-linear map $G_{f_1,\dots,f_k}:P\to Q_h$ given by the formula
\begin{equation}
G_{f_1,\dots,f_k} = \delta_h (f_1)\dots \delta_h (f_k) L_h.
\end{equation}
From the assumption that $L$ is an LDO of order $\leqslant k$ and Lemma \ref{lem_El_Commute} we obtain that for any $f\in A$ the function $\delta_h (f) G_{f_1,\dots,f_k}=0$. Thus, from Lemma \ref{lem_El_TensorLike} applied to $G_{f_1,\dots,f_k}$, we get that there exists a $K$-linear map $G^h_{f_1,\dots,f_k}:P_h\to Q_h$ such that for any $p\in P$
\begin{equation}
G^h_{f_1,\dots,f_k}(p_h)=\left(\delta_h (f_1)\dots \delta_h (f_k) L_h\right)(p).
\end{equation}
Hence, by changing $f_1,\dots,f_k$, we obtain the $K$-linear mapping
\begin{equation}
\label{eq_multiderivation}
A^{\otimes k} \ni f_1\otimes\dots\otimes f_k \mapsto G^h_{f_1,\dots,f_k}\in \Hom_K(P_h,Q_h).
\end{equation}
We know from Lemma \ref{lem_Mod_Delta} that $\delta_h$ behaves almost like an $h$-derivation. Thus, using the assumption that $L$ is an LDO of order $\leqslant k$, we obtain that the mapping \eqref{eq_multiderivation} is an $h$-derivation on every coordinate. In fact, for any $f,g,f_2,\dots, f_k \in A$
\begin{equation}
\begin{split}
G^h_{fg,f_2,\dots,f_k} & = \delta_h (fg)\delta_h (f_2)\dots \delta_h (f_k) L_h \\
\comment{\eqref{eq_delta}}& = (h(f)\delta_h(g)+h(g)\delta_h(f)+\delta_h(f)\delta_h(g))\delta_h (f_2)\dots \delta_h (f_k) L_h \\
\comment{L\textnormal{ is an LDO}} & = h(f)G^h_{g,f_2,\dots,f_k}+h(g)G^h_{f,f_2,\dots,f_k}.
\end{split}
\end{equation}
Since the $K$-algebra $A$ is commutative, we get that the mapping \eqref{eq_multiderivation} is symmetric. Hence, from the universal property of $(\Omega_h,d_h)$, there exists a $K$-linear map
\begin{equation}
\sigma_{L,h}:\left(\Omega_h\right)^{\odot k}\otimes_K P_h\to Q_h
\end{equation}
such that for any $f_1,\dots ,f_k\in A$ and $p\in P$
\begin{equation}
\sigma_{L,h}((d_h(f_1)\odot \dots\odot d_h(f_k))\otimes p_h)=\left( \delta_h (f_1)\dots \delta_h (f_k) L_h\right)(p).
\end{equation}
Notice that we moved $P_h$ into the argument using the tensor-hom adjunction. Finally, if $f_1,\dots,f_k\in I_h$, then clearly
\begin{equation}
\delta_h (f_1)\dots\delta_h (f_k) = 1_K\otimes f_1\cdots f_k.
\end{equation}
As a result, for any $f\in I_h$ and $p\in P$
\begin{equation}
\sigma_{L,h}((d_h(f))^{\odot k}\otimes p_h) = L_h(f^k p).
\end{equation}
The uniqueness of $\sigma_{L,h}$ follows from Proposition \ref{prop_Der_UniFactor}, which implies that the $K$-module $\Omega_h$ is generated by the image of $d_h$.
\end{proof}

In the consequence of Theorem \ref{thm_El_Symbol} we are able to define the principal symbol at any $K$-point and the ellipticity of a linear differential operator in a purely algebraic way.

\begin{definition*}
Let $L:P\to Q$ be an LDO of order $k$ and let $h\in |A|$. We will call $\sigma_{L,h}$ the \emph{principal symbol of $L$ at $h$}, where $\sigma_{L,h}$ is defined as in Theorem \ref{thm_El_Symbol}. We will say that \emph{$L$ is of order $k$ at $h$} if the map $\sigma_{L,h}\neq 0$. If $L$ is of order $k$ at $h$ and for every non-zero $\alpha\in \Omega_h$ the $K$-linear map $\sigma_{L,h,\alpha}:P_h\to Q_h$ given by the formula
\begin{equation}
\sigma_{L,h,\alpha}(p_h)=\sigma_{L,h}(\alpha^{\odot k}\otimes p_h)
\end{equation} 
is injective, then we will say that \emph{$L$ is elliptic at $h$}. Finally, we will say that \emph{$L$ is elliptic} if it is elliptic at $h$ for every $h\in |A|$.
\end{definition*}

\section{Elliptic LDOs on real affine varieties}
\label{sec_AffVar}

In this section we will show that the algebraic approach to elliptic LDOs from the previous section goes beyond the realm of smooth manifolds. We will show that for every real affine variety there is an elliptic linear differential operator acting on the algebra of regular functions on this variety. 

First, we notice that the algebra of regular functions on an affine variety has all $K$-points of a fixed form. More specifically, $K$-points of the algebra of regular functions correspond bijectively to ordinary points of variety, and they are precisely the evaluations.

\begin{prop}
\label{prop_Af_Kpoints}
Let $\mathbb{F}$ be a field and assume that $V$ is an affine variety embedded in $\mathbb{F}^n$. Then every $\mathbb{F}$-point of $\mathbb{F}[V]$ is of the form $\ev_z$ for some $z\in V$.
\end{prop}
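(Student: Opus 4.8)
The plan is to reduce an arbitrary $\mathbb{F}$-point of $\mathbb{F}[V]$ to a genuine evaluation by lifting it through the quotient presentation $\mathbb{F}[V]=\mathbb{F}[X_1,\dots,X_n]/I$. Given an $\mathbb{F}$-algebra homomorphism $h:\mathbb{F}[V]\to\mathbb{F}$, I would first compose it with the canonical projection $\pi:\mathbb{F}[X_1,\dots,X_n]\to\mathbb{F}[V]$ to obtain an $\mathbb{F}$-algebra homomorphism $\widetilde{h}=h\circ\pi:\mathbb{F}[X_1,\dots,X_n]\to\mathbb{F}$. The candidate point is then $z=(\widetilde{h}(X_1),\dots,\widetilde{h}(X_n))\in\mathbb{F}^n$.

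The first key step is to verify that $\widetilde{h}$ is precisely evaluation at $z$, i.e. $\widetilde{h}(f)=f(z)$ for every polynomial $f$. This is a routine but essential computation: since $\widetilde{h}$ is an $\mathbb{F}$-algebra homomorphism it fixes the scalars of $\mathbb{F}$ and respects sums and products, so it is determined on monomials by its values $\widetilde{h}(X_i)=z_i$ on the generators; expanding $f$ as an $\mathbb{F}$-linear combination of monomials then yields $\widetilde{h}(f)=f(z)$ directly.

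The second key step is to show that $z$ actually lies on $V$. Here I would use that $\pi$ annihilates $I$, so $\widetilde{h}(g)=h(\pi(g))=h(0)=0$ for every $g\in I$. Combined with the previous step, this gives $g(z)=0$ for all $g\in I$, so $z$ is a common zero of the ideal $I$ defining $V$; hence $z\in V$ by the definition of the affine variety $V$ as the zero locus of $I$.

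Finally, I would conclude $h=\ev_z$ by evaluating on an arbitrary $\overline{f}\in\mathbb{F}[V]$: choosing a representative $f\in\mathbb{F}[X_1,\dots,X_n]$, I obtain $h(\overline{f})=h(\pi(f))=\widetilde{h}(f)=f(z)=\ev_z(\overline{f})$. I expect the only point requiring genuine care to be the verification of $z\in V$, which hinges on the fact that $h$ factors through the quotient by $I$; notably, since we consider only homomorphisms valued in $\mathbb{F}$ itself, no Nullstellensatz is needed and the argument remains valid over an arbitrary field $\mathbb{F}$.
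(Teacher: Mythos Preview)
Your proof is correct and follows essentially the same approach as the paper: both define $z$ via the values of the homomorphism on the generators $X_i$ (equivalently $\overline{X}_i$), use that an $\mathbb{F}$-algebra homomorphism is determined by these values to identify it with evaluation at $z$, and then check $z\in V$ by observing $g(z)=h(\overline{g})=h(0)=0$ for $g\in I$. The only cosmetic difference is that you phrase the argument by lifting through $\pi$ to the polynomial ring, while the paper works directly with the images $\overline{X}_i$ in $\mathbb{F}[V]$.
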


\begin{proof}
Let $V$ be given by an ideal $I\subset\mathbb{F}[X_1,\dots,X_n]$ and assume that $h:\mathbb{F}[V]\to \mathbb{F}$ is an $\mathbb{F}$-algebra homomorphism. Since the $\mathbb{F}$-algebra $\mathbb{F}[V]$ is generated by $\overline{X}_1,\dots,\overline{X}_n$, we get that
\begin{equation}
h(\overline{f}) = f(h(\overline{X}_1),\dots, h(\overline{X}_n))
\end{equation}
for any $\overline{f}\in \mathbb{F}[V]$, where $f$ is some representative of $\overline{f}$. Thus $h = \ev_z$, where $z$ equals $(h(\overline{X}_1),\dots, h(\overline{X}_n))$. It is left to prove that $z\in V$. For any $g\in I$ we have that
\begin{equation}
g(z) = h(\overline{g}) = h(0) = 0.
\end{equation}
So $z\in V$.
\end{proof}

From Proposition \ref{prop_Af_Kpoints} and already mentioned fact from \cite[sec. 7.3]{Nestruev} (see the second paragraph preceding Lemma \ref{lem_El_Commute}), which characterises $\mathbb{R}$-points of $C^\infty(X)$, we obtain some curious observation. Even though characterisations of $K$-points on algebraic varieties and smooth manifolds are practically the same, the reasons why it is so are completely different. In the case of affine varieties, as we have seen above, the reasoning is purely algebraic. On the other hand, in the case of smooth manifolds the characterisation follows from the existence of a smooth function with compact level surfaces, which is rather a topological argument.

Given an affine variety, we have a natural surjective algebra homomorphism from the algebra of polynomials to the algebra of regular functions. Hence we are going to study the behaviour of $\delta$ and the principal symbol under the change of an algebra by a homomorphism.

\begin{prop}
\label{prop_Af_SymCom}
Assume that $S,T$ are two $K$-algebras and $\alpha:S\to T$ is a $K$-algebra homomorphism. If $\Phi:S\to S$ and $\Psi:T\to T$ are $K$-linear maps such that $\alpha\circ \Phi = \Psi\circ \alpha$, then for any $f\in S$
\begin{equation}
\label{eq_Af_PropCommute}
(\delta (\alpha(f))\Psi) \circ \alpha = \alpha \circ (\delta (f)\Phi).
\end{equation}
\end{prop}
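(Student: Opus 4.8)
The plan is to verify the identity directly, by evaluating both sides on an arbitrary element $p\in S$ and unwinding the definition of the action of $\delta$ on $K$-linear maps introduced at the start of Section \ref{sec_Elliptic}. Here each algebra acts on itself, so I regard $\Phi$ as an element of $\Hom_K(S,S)$ and $\Psi$ as an element of $\Hom_K(T,T)$. With the action $((a\otimes f)F)(p)=aF(fp)$ and $\delta(f)=1\otimes f-f\otimes 1$, this gives $(\delta(f)\Phi)(p)=\Phi(fp)-f\Phi(p)$ for $p\in S$, and likewise $(\delta(\alpha(f))\Psi)(q)=\Psi(\alpha(f)q)-\alpha(f)\Psi(q)$ for $q\in T$.

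First I would evaluate the left-hand side $(\delta(\alpha(f))\Psi)\circ\alpha$ at $p$, obtaining $\Psi(\alpha(f)\alpha(p))-\alpha(f)\Psi(\alpha(p))$. The key step is then to invoke that $\alpha$ is an \emph{algebra} homomorphism, so $\alpha(f)\alpha(p)=\alpha(fp)$, together with the intertwining hypothesis $\Psi\circ\alpha=\alpha\circ\Phi$, which replaces $\Psi(\alpha(\cdot))$ by $\alpha(\Phi(\cdot))$ in both summands. This turns the expression into $\alpha(\Phi(fp))-\alpha(f)\alpha(\Phi(p))$.

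Finally, applying the homomorphism property once more in the opposite direction, $\alpha(f)\alpha(\Phi(p))=\alpha(f\Phi(p))$, and pulling $\alpha$ outside by $K$-linearity yields $\alpha\bigl(\Phi(fp)-f\Phi(p)\bigr)=\alpha((\delta(f)\Phi)(p))$, which is exactly the right-hand side $(\alpha\circ(\delta(f)\Phi))(p)$. I do not expect a genuine obstacle: the statement is a formal consequence of $\alpha$ respecting products together with the intertwining relation. The only point demanding care is bookkeeping, namely interpreting $\delta(\alpha(f))\Psi$ as an operator built from multiplications in $T$ and $\delta(f)\Phi$ as one built from multiplications in $S$, and observing that it is precisely the homomorphism property of $\alpha$ that mediates the passage between them.
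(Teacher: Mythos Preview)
Your proof is correct and follows essentially the same direct computation as the paper: evaluate both sides at an arbitrary element, expand $\delta$ via its definition, and use the multiplicativity of $\alpha$ together with the intertwining relation $\alpha\circ\Phi=\Psi\circ\alpha$ to pass from one side to the other. The paper's argument is slightly more compressed (it folds the step $\alpha(f)\alpha(\Phi(p))=\alpha(f\Phi(p))$ into the same line as the intertwining), but the content is identical.
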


\begin{proof}
Simply, for any $f,g\in S$ we have that
\begin{equation}
\begin{split}
(\delta (\alpha(f)) \Psi)(\alpha(g)) &= \Psi(\alpha(fg))-\alpha(f)\Psi(\alpha(g)) \\
\comment{\alpha\circ \Phi = \Psi \circ \alpha}&= \alpha(\Phi(fg))-\alpha(f\Phi(g)) \\
\comment{\delta(f)\textnormal{ acts on }\Phi}&= \alpha((\delta (f) \Phi)(g)).
\end{split}
\end{equation}
\end{proof}

\begin{lemma}
\label{lem_Af_SymCom}
Assume that $S,T$ are two $K$-algebras and $\alpha:S\to T$ is a $K$-algebra homomorphism. If $F:S\to S$ and $G:T\to T$ are $K$-linear maps such that $\alpha\circ F = G\circ \alpha$, then for any natural number $j$ and any $f_1,\dots,f_j\in S$
\begin{equation}
\label{eq_Af_LemmaCommute1}
(\delta (\alpha(f_1))\dots \delta (\alpha(f_j)) G) \circ \alpha = \alpha \circ (\delta (f_1)\dots\delta (f_j) F),
\end{equation}
and for any $h\in |T|$
\begin{equation}
\label{eq_Af_LemmaCommute2}
\delta_{h\circ\alpha}(f_1)\dots \delta_{h\circ\alpha}(f_j) F_{h\circ\alpha} = \delta_h(\alpha(f_1))\dots\delta_h(\alpha(f_j))G_h\circ \alpha.
\end{equation}
\end{lemma}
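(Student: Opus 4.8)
The lemma has two equations to prove. The first, \eqref{eq_Af_LemmaCommute1}, is a statement about the operators $\delta(\cdot)$ composed $j$ times and intertwined through $\alpha$; the second, \eqref{eq_Af_LemmaCommute2}, is its "pointwise" counterpart obtained by evaluating at a $K$-point $h\in|T|$. The natural plan is to prove \eqref{eq_Af_LemmaCommute1} by induction on $j$ using Proposition \ref{prop_Af_SymCom} as the single-factor base case, and then to derive \eqref{eq_Af_LemmaCommute2} from \eqref{eq_Af_LemmaCommute1} by post-composing with the evaluation projection $\ev_h$, very much in the spirit of Lemma \ref{lem_El_Commute}.

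**Proof of the first equation.** The plan is a straightforward induction on $j$. The base case $j=1$ is exactly Proposition \ref{prop_Af_SymCom}: the hypothesis $\alpha\circ F=G\circ\alpha$ gives $(\delta(\alpha(f_1))G)\circ\alpha=\alpha\circ(\delta(f_1)F)$. For the inductive step, I would set $\Phi=\delta(f_2)\dots\delta(f_j)F$ and $\Psi=\delta(\alpha(f_2))\dots\delta(\alpha(f_j))G$; the inductive hypothesis says precisely that $\Psi\circ\alpha=\alpha\circ\Phi$, which is the intertwining relation needed to apply Proposition \ref{prop_Af_SymCom} once more (with $f=f_1$) and peel off the outermost factor $\delta(\alpha(f_1))$ respectively $\delta(f_1)$. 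This immediately yields \eqref{eq_Af_LemmaCommute1}. The key point to keep straight is that at each stage the "partially applied" operator on the $T$-side intertwines with its $S$-side partner through $\alpha$, so that the single-factor commutation lemma applies cleanly.

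**Proof of the second equation.** Here I would reduce to \eqref{eq_Af_LemmaCommute1} by composing on the left with the evaluation $\ev_h:T\to T_h\cong K$ at the point $h\in|T|$. The crucial observation is that $h\circ\alpha\in|S|$ is itself a $K$-point of $S$, and under the canonical identifications of $S_{h\circ\alpha}$ and $T_h$ with $K$ (via $h\circ\alpha$ and $h$ respectively) the map $\ev_{h\circ\alpha}$ on $S$ agrees with $\ev_h\circ\alpha$, since $h\circ\alpha=\ev_{h\circ\alpha}$ after the identification $S_{h\circ\alpha}\cong K$. Concretely, applying $\ev_h=h$ to both sides of \eqref{eq_Af_LemmaCommute1} and using Lemma \ref{lem_El_Commute} on each side — once for the algebra $T$ with point $h$ and the operator $G$, once for the algebra $S$ with point $h\circ\alpha$ and the operator $F$ — converts the global $\delta$-operators into their pointwise versions $\delta_h$ and $\delta_{h\circ\alpha}$. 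The left-hand side becomes $\delta_h(\alpha(f_1))\dots\delta_h(\alpha(f_j))G_h\circ\alpha$ and the right-hand side becomes $\delta_{h\circ\alpha}(f_1)\dots\delta_{h\circ\alpha}(f_j)F_{h\circ\alpha}$, which is exactly \eqref{eq_Af_LemmaCommute2}.

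**Main obstacle.** The only genuinely delicate point is the bookkeeping in the second equation: one must verify carefully that $\ev_h\circ\alpha=\ev_{h\circ\alpha}$ under the identifications $T_h\cong K\cong S_{h\circ\alpha}$, and that the hypothesis $\alpha\circ F=G\circ\alpha$ correctly produces the fibrewise identity $G_h\circ\alpha=\ev_h\circ G\circ\alpha=\ev_h\circ\alpha\circ F=\ev_{h\circ\alpha}\circ F=F_{h\circ\alpha}$, so that the two applications of Lemma \ref{lem_El_Commute} match up on the dot. Once these identifications are pinned down, both equations are essentially formal consequences of Proposition \ref{prop_Af_SymCom} and Lemma \ref{lem_El_Commute}; the induction itself is routine.
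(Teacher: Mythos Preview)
Your proposal is correct and follows essentially the same approach as the paper: induction on $j$ using Proposition~\ref{prop_Af_SymCom} for \eqref{eq_Af_LemmaCommute1}, then post-composing with $\ev_h$ and invoking Lemma~\ref{lem_El_Commute} twice (together with the identification $\ev_{h\circ\alpha}=\ev_h\circ\alpha$) to obtain \eqref{eq_Af_LemmaCommute2}. The only cosmetic difference is the indexing in the inductive step---the paper peels off a new outer factor labelled $f_0$ while you relabel so that $f_1$ is outermost---but the argument is identical.
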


\begin{proof}
We will show the first part inductively. The first step of the induction is precisely Proposition \ref{prop_Af_SymCom}. So fix $f_0,\dots, f_j\in S$ and put
\begin{equation}
\Phi = \delta (f_1)\dots\delta (f_j) F, \quad \Psi = \delta (\alpha(f_1))\dots \delta (\alpha(f_j)) G.
\end{equation}
The inductive assumption states that $\alpha\circ \Phi = \Psi\circ \alpha$, so by using again Proposition \ref{prop_Af_SymCom} we have that
\begin{equation}
\begin{split}
(\delta (\alpha(f_0))\delta (\alpha(f_1))\dots \delta (\alpha(f_j)) G)\circ \alpha&=(\delta (\alpha(f_0))\Psi)\circ \alpha=\alpha\circ (\delta(f_0)\Phi)\\
&=\alpha\circ (\delta(f_0)\delta (f_1)\dots\delta (f_j) F).
\end{split}
\end{equation}
The second part of the lemma follows from the first one. From Lemma \ref{lem_El_Commute} for any $f_1,\dots,f_j\in S$ and any $K$-point $h\in|A|$ we have that
\begin{equation}
\begin{split}
\delta_{h\circ\alpha}(f_1)\dots \delta_{h\circ\alpha}(f_j) F_{h\circ\alpha} &= \ev_{h\circ\alpha}\circ (\delta(f_1)\dots\delta(f_j)F)\\
\comment{\ev_{h\circ\alpha}=\ev_h \circ\alpha}&= \ev_h\circ (\alpha\circ (\delta(f_1)\dots\delta(f_j)F))\\
\comment{\eqref{eq_Af_LemmaCommute1}}&= \ev_h\circ (\delta(\alpha(f_1))\dots \delta(\alpha(f_j))G\circ\alpha)\\
\comment{\textnormal{change of brackets}}&= (\ev_h\circ \delta(\alpha(f_1))\dots \delta(\alpha(f_j))G)\circ \alpha\\
\comment{\textnormal{Lemma \ref{lem_El_Commute}}}&= \delta_h(\alpha(f_1))\dots\delta_h(\alpha(f_j))G_h\circ \alpha.
\end{split}
\end{equation}
\end{proof}

We recall that for every $K$-algebra $A$ and $h\in|A|$ the $K$-module $A_h$ is naturally isomorphic with $K$. Thus if $L:A\to A$ is an LDO of order $\leqslant k$, then the principal symbol of $L$ at $h$
\begin{equation}
\sigma_{L,h}:\left(\Omega_h\right)^{\odot k}\otimes_K A_h\to A_h
\end{equation}
can be identified with the $K$-linear functional
\begin{equation}
\sigma_{L,h}:\left(\Omega_h\right)^{\odot k}\to K
\end{equation}
given by the formula
\begin{equation}
\label{eq_Af_SymoblAlg}
\sigma_{L,h}(d_h(f_1)\odot\dots\odot d_h(f_k)) = (\delta_h (f_1)\dots\delta_h(f_k)L_h)(1_A).
\end{equation}
We will use this identification when we consider LDOs acting on $K$-algebras.

\begin{lemma}
\label{lem_Af_Symbol}
Assume that $S,T$ are two $K$-algebras and $\pi:S\to T$ is a surjective homomorphism of $K$-algebras. If $L:S\to S$ is an LDO of order $\leqslant k$ and $L':T\to T$ is a $K$-linear map such that $\pi\circ L = L'\circ \pi$, then $L'$ is also an LDO of order $\leqslant k$ and for any $h\in |T|$
\begin{equation}
\label{eq_Af_LemmaSymbolEq}
\sigma_{L',h}\circ \left(\Omega_\pi\right)^{\odot k} = \sigma_{L,h\circ \pi},
\end{equation}
where $\Omega_\pi:\Omega_{h\circ\pi} \to\Omega_h$ is the induced map from $\pi$ by the functor $\Omega$ as in Proposition \ref{prop_Der_ComaCat}.
\end{lemma}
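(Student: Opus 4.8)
The plan is to obtain both assertions from the two halves of Lemma \ref{lem_Af_SymCom}, using surjectivity of $\pi$ as the extra ingredient. For the claim that $L'$ is an LDO of order $\leqslant k$, I would fix $g_0,\dots,g_k\in T$, write each as $g_i=\pi(f_i)$ (possible since $\pi$ is onto), and apply \eqref{eq_Af_LemmaCommute1} with $F=L$, $G=L'$, $\alpha=\pi$, $j=k+1$. The right-hand side $\pi\circ(\delta(f_0)\dots\delta(f_k)L)$ vanishes because $L$ has order $\leqslant k$, so $(\delta(g_0)\dots\delta(g_k)L')\circ\pi=0$; surjectivity of $\pi$ then forces $\delta(g_0)\dots\delta(g_k)L'=0$, and as the $g_i$ were arbitrary this is the order bound for $L'$.

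For the symbol identity I would first identify $\Omega_\pi$ as the map of Proposition \ref{prop_Der_ComaCat} associated to $\pi$ viewed as a morphism in $(\Alg_K\downarrow K)$ from $h\circ\pi$ to $h$, so that $\Omega_\pi\circ d_{h\circ\pi}=d_h\circ\pi$ and hence $(\Omega_\pi)^{\odot k}$ carries $d_{h\circ\pi}(f_1)\odot\dots\odot d_{h\circ\pi}(f_k)$ to $d_h(\pi(f_1))\odot\dots\odot d_h(\pi(f_k))$. Because $\Omega_{h\circ\pi}$ is spanned by $\Ima d_{h\circ\pi}$ (Proposition \ref{prop_Der_UniFactor}), such products generate $(\Omega_{h\circ\pi})^{\odot k}$, so it suffices to check \eqref{eq_Af_LemmaSymbolEq} on them.

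On such a generator, \eqref{eq_Af_SymoblAlg} rewrites the left side of \eqref{eq_Af_LemmaSymbolEq} as $(\delta_h(\pi(f_1))\dots\delta_h(\pi(f_k))L'_h)(1_T)$ and the right side as $(\delta_{h\circ\pi}(f_1)\dots\delta_{h\circ\pi}(f_k)L_{h\circ\pi})(1_S)$. These are matched exactly by the second part \eqref{eq_Af_LemmaCommute2} of Lemma \ref{lem_Af_SymCom}, evaluated at $1_S$: that identity equates $\delta_{h\circ\pi}(f_1)\dots\delta_{h\circ\pi}(f_k)L_{h\circ\pi}$ with $\delta_h(\pi(f_1))\dots\delta_h(\pi(f_k))L'_h\circ\pi$, and unitality of $\pi$ gives $\pi(1_S)=1_T$, so the two evaluations coincide.

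The one point that needs care is the bookkeeping of the canonical isomorphisms $S_{h\circ\pi}\cong K$ and $T_h\cong K$, since both sides of \eqref{eq_Af_LemmaSymbolEq} are scalars only after these identifications. I would check that $\pi$ descends to a map $S_{h\circ\pi}\to T_h$ which is the identity of $K$ under the identifications via $h\circ\pi$ and $h$ (both send the relevant class to $h(\pi(s))$); this compatibility is precisely what lets me read \eqref{eq_Af_LemmaCommute2} at $1_S$ as an equality of scalars, and is the main obstacle to watch. Everything else is a direct assembly of the earlier lemmas.
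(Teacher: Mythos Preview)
Your proposal is correct and follows essentially the same route as the paper: both halves of Lemma~\ref{lem_Af_SymCom} are applied exactly as you describe, with surjectivity of $\pi$ used to upgrade the identity $(\delta(\pi(f_0))\dots\delta(\pi(f_k))L')\circ\pi=0$ to the vanishing of $\delta(g_0)\dots\delta(g_k)L'$, and \eqref{eq_Af_LemmaCommute2} at $1_S$ (via $\pi(1_S)=1_T$) together with \eqref{eq_Af_SymoblAlg} yielding the symbol identity on generators. Your explicit remarks about checking on generators (Proposition~\ref{prop_Der_UniFactor}) and about the compatibility of the identifications $S_{h\circ\pi}\cong K\cong T_h$ are points the paper leaves implicit but are handled just as you indicate.
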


\begin{proof}
Since $\pi\circ L = L'\circ \pi$, by using Lemma \ref{lem_Af_SymCom} we obtain that for any $f_0,\dots,f_k\in S$
\begin{equation}
(\delta (\pi(f_0))\dots \delta (\pi(f_k)) L') \circ \pi = \pi \circ (\delta (f_0)\dots \delta (f_k)L ) = 0.
\end{equation}
Hence, from the surjectivity of $\pi$, we get that $\delta (g_0)\dots \delta (g_k) L' = 0$ for any $g_0,\dots,g_k\in T$. This means that $L'$ is an LDO of order $\leqslant k$. Now let $h\in |T|$ and $f_1,\dots,f_k\in S$. From the definition of $\Omega_\pi$ we have that $\Omega_\pi(d_{h\circ \pi}(f))=d_h(\pi(f))$ for every $f\in S$, and thus for any $f_1,\dots,f_k\in S$
\begin{equation}
\begin{split}
\sigma_{L',h} ((\Omega_\pi)^{\odot k})(d_{h\circ \pi} (f_1)\odot \dots \odot d_{h\circ \pi} (f_k)) &= \sigma_{L',h}(d_h(\pi(f_1))\odot\dots\odot d_h(\pi(f_k))) \\
\comment{\eqref{eq_Af_SymoblAlg}}&= (\delta_h (\pi(f_1))\dots \delta_h (\pi(f_k))L'_h)(1_T)\\
\comment{1_T = \pi(1_S)}&= (\delta_h (\pi(f_1))\dots \delta_h (\pi(f_k))L'_h)(\pi(1_S)) \\
\comment{\textnormal{Lemma }\ref{lem_Af_SymCom}}&= (\delta_{h\circ\pi}(f_1)\dots \delta_{h\circ\pi}(f_k)L_{h\circ \pi})(1_S)\\
\comment{\eqref{eq_Af_SymoblAlg}}&= \sigma_{L,h\circ \pi}(d_{h\circ \pi} (f_1)\odot \dots \odot d_{h\circ \pi} (f_k)).
\end{split}
\end{equation} 
\end{proof}

As a result, we get that the ellipticity is preserved by a surjective homomorphism of $K$-algebras in the following sense.

\begin{theorem}
\label{thm_Af_InducElip}
Assume that $S,T$ are two $K$-algebras, $\pi:S\to T$ is a surjective homomorphism of $K$-algebras and $L:S\to S$ is an LDO of order $k$ at $h\circ\pi$ and elliptic at $h\circ\pi$ for every $h\in |T|$. If $L':T\to T$ is $K$-linear and such that $\pi\circ L = L'\circ \pi$, then $L'$ is an elliptic LDO of order $k$.
\end{theorem}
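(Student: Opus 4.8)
The plan is to derive everything from Lemma \ref{lem_Af_Symbol} together with the surjectivity of the induced map $\Omega_\pi$. First I would apply Lemma \ref{lem_Af_Symbol} to the relation $\pi \circ L = L' \circ \pi$: since $L$ is an LDO of order $\leqslant k$, this immediately gives that $L'$ is an LDO of order $\leqslant k$ and that, for every $h \in |T|$,
\[
\sigma_{L',h} \circ (\Omega_\pi)^{\odot k} = \sigma_{L,h\circ\pi},
\]
where $\Omega_\pi : \Omega_{h\circ\pi} \to \Omega_h$ is the induced map. Because $\pi$ is surjective, Proposition \ref{prop_Der_SurPre} ensures that $\Omega_\pi$ is surjective. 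By the definition of ellipticity it then suffices to fix an arbitrary $h \in |T|$ and verify two things: that $\sigma_{L',h}\neq 0$ (so that $L'$ has order $k$ at $h$) and that $\sigma_{L',h,\beta}$ is injective for every non-zero $\beta \in \Omega_h$.

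For the order, I would argue through the symbol relation. As $L$ is of order $k$ at $h\circ\pi$, we have $\sigma_{L,h\circ\pi}\neq 0$, and the displayed identity forces $\sigma_{L',h}\neq 0$, since otherwise its precomposition with $(\Omega_\pi)^{\odot k}$ would vanish. This gives order $k$ at each $h$. To upgrade this to global order $k$, I would note that if $L'$ had global order $\leqslant k-1$ then Lemma \ref{lem_El_Commute} would yield $\delta_h(g_1)\dots\delta_h(g_k)L'_h = \ev_h\circ \delta(g_1)\dots\delta(g_k)L' = 0$ for all $g_i$, forcing $\sigma_{L',h}=0$, a contradiction (provided $|T|$ is non-empty).

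For ellipticity, I would fix $h$ and a non-zero $\beta \in \Omega_h$ and lift it along the surjection $\Omega_\pi$ to some $\alpha \in \Omega_{h\circ\pi}$ with $\Omega_\pi(\alpha)=\beta$; crucially $\alpha \neq 0$ because $\beta \neq 0$. Since $L$ and $L'$ act on the algebras themselves, the identification $A_h\cong K$ underlying \eqref{eq_Af_SymoblAlg} turns $\sigma_{L,h\circ\pi,\alpha}$ and $\sigma_{L',h,\beta}$ into the self-maps of $K$ given by multiplication by the scalars $\sigma_{L,h\circ\pi}(\alpha^{\odot k})$ and $\sigma_{L',h}(\beta^{\odot k})$. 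Using $(\Omega_\pi)^{\odot k}(\alpha^{\odot k}) = (\Omega_\pi(\alpha))^{\odot k} = \beta^{\odot k}$ together with the symbol relation, these two scalars coincide. As $L$ is elliptic at $h\circ\pi$ and $\alpha\neq 0$, the map $\sigma_{L,h\circ\pi,\alpha}$ is injective, hence the common scalar is a non-zero-divisor in $K$, and therefore $\sigma_{L',h,\beta}$ is injective as well. Letting $h$ range over $|T|$ then shows that $L'$ is elliptic.

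I expect the main obstacle to be the careful bookkeeping in the last step: $\sigma_{L,h\circ\pi,\alpha}$ and $\sigma_{L',h,\beta}$ a priori act on the different fibres $S_{h\circ\pi}$ and $T_h$, and the only bridge between them is the scalar identification $A_h\cong K$ combined with the functional form of the symbol relation from Lemma \ref{lem_Af_Symbol}. The point that makes the ellipticity hypothesis on $L$ usable is the surjectivity of $\Omega_\pi$, guaranteeing that every non-zero direction $\beta$ downstairs is hit by a non-zero direction $\alpha$ upstairs; everything else is an unwinding of the definitions of order and ellipticity at a point.
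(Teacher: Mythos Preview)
Your proposal is correct and follows essentially the same route as the paper: invoke Lemma~\ref{lem_Af_Symbol} for the symbol relation $\sigma_{L',h}\circ(\Omega_\pi)^{\odot k}=\sigma_{L,h\circ\pi}$, use Proposition~\ref{prop_Der_SurPre} to lift any non-zero $\beta\in\Omega_h$ to a non-zero preimage, and then read off ellipticity from the hypothesis on $L$. Your treatment is in fact slightly more careful than the paper's on two points: you separate out the verification that $\sigma_{L',h}\neq 0$ (order $k$ at $h$) directly from the symbol relation rather than folding it into the non-vanishing on non-zero $\alpha$, and you spell out why pointwise order $k$ forces global order $k$ when $|T|\neq\emptyset$.
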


\begin{proof}
From Lemma \ref{lem_Af_Symbol} we have that $L'$ is an LDO of order $\leqslant k$ and for every $h\in|T|$
\begin{equation}
\label{eq_Af_ThmSymSur}
\sigma_{L',h}\circ \left(\Omega_\pi\right)^{\odot k} = \sigma_{L,h\circ \pi}.
\end{equation}
Thus we just need to show that $\sigma_{L',h}(\alpha^{\odot k})\neq 0$ for any non-zero $\alpha\in\Omega_h$. Proposition \ref{prop_Der_SurPre} states that the functor $\Omega$ preserves surjectivity. So we have that any $\alpha\in\Omega_h$ is of the form $\Omega_\pi(\beta)$ for some $\beta\in\Omega_{h\circ\pi}$. Thus we have that
\begin{equation}
\sigma_{L',h}(\alpha^{\odot k}) = \sigma_{L',h}((\Omega_{\pi}(\beta))^{\odot k})=\sigma_{L,h\circ \pi}(\beta^{\odot k}),
\end{equation}
and this is non-zero for non-zero $\alpha$ because of the assumption about $L$.
\end{proof}

Now, let $V$ be a real affine variety defined by some ideal $I=(f_1,\dots,f_k)\subset\mathbb{R}[X_1,\dots,X_n]$. Let $\mathbb{R}[V]=\mathbb{R}[X_1,\dots,X_n]/I$ denote the $\mathbb{R}$-algebra of regular functions on $V$ and let $\pi:\mathbb{R}[X_1,\dots,X_n]\to\mathbb{R}[V]$ be the canonical projection. We know from Proposition \ref{prop_Af_Kpoints} that $\mathbb{R}$-points of $\mathbb{R}[X_1,\dots,X_n]$ and $\mathbb{R}[V]$ correspond bijectively to $\mathbb{R}^n$ and $V$ respectively. Thus we can talk about the ellipticity at ordinary points. By applying Theorem \ref{thm_Af_InducElip} to the homomorphism $\pi$ we obtain a simple tool for constructing elliptic linear differential operators on real affine varieties.

\begin{corollary}
\label{cor_Af_Var}
If $L:\mathbb{R}[X_1,\dots,X_n]\to\mathbb{R}[X_1,\dots,X_n]$ is an LDO of order $k$ at $z$ and elliptic at $z$ for every $z\in V$ and such that $L(I)\subset I$, then the induced map $L':\mathbb{R}[V]\to\mathbb{R}[V]$ is an elliptic linear differential operator of order $k$.
\end{corollary}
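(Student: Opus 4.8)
The plan is to realize this as a direct application of Theorem \ref{thm_Af_InducElip}, instantiated with $K=\mathbb{R}$, $S=\mathbb{R}[X_1,\dots,X_n]$, $T=\mathbb{R}[V]$, and $\pi$ the canonical projection. Almost all of the work is then verifying that the hypotheses of that theorem hold in the present geometric setting, after which the conclusion follows for free.

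First I would construct the induced operator $L'$ and check it is well defined, which is precisely where the assumption $L(I)\subset I$ enters. Setting $L'(\overline{f})=\overline{L(f)}$ is unambiguous because $\overline{f}=\overline{g}$ means $f-g\in I$, and then $L(f)-L(g)=L(f-g)\in L(I)\subset I$, so $\overline{L(f)}=\overline{L(g)}$. By construction $L'$ is $\mathbb{R}$-linear and satisfies $\pi\circ L=L'\circ\pi$, which is exactly the compatibility condition demanded by Theorem \ref{thm_Af_InducElip}. The surjectivity of $\pi$ is immediate, since it is a quotient map.

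The key step, and the one requiring the most care, is matching the ellipticity hypotheses, since the theorem is phrased in terms of $h\circ\pi$ for $h\in|T|$ while the corollary is phrased in terms of ordinary points $z\in V$. Here I would invoke Proposition \ref{prop_Af_Kpoints}: every $h\in|\mathbb{R}[V]|$ is of the form $\ev_z$ for some $z\in V$. I would then identify $h\circ\pi$ explicitly, observing that for $f\in\mathbb{R}[X_1,\dots,X_n]$ one has $(\ev_z\circ\pi)(f)=\ev_z(\overline{f})=f(z)$, so $h\circ\pi$ is evaluation at $z$ regarded as an $\mathbb{R}$-point of the polynomial ring. Conversely every such $\ev_z$ with $z\in V$ arises in this way, so the families $\{h\circ\pi:h\in|T|\}$ and $\{\ev_z:z\in V\}$ coincide. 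Under this identification, the corollary's assumption that $L$ is of order $k$ and elliptic at every $z\in V$ is literally the hypothesis of Theorem \ref{thm_Af_InducElip}.

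With these verifications in place, Theorem \ref{thm_Af_InducElip} applies directly and yields that $L'$ is an elliptic LDO of order $k$. I expect no serious obstacle beyond bookkeeping; the only genuinely delicate point is the translation between ordinary points of $V$ and $\mathbb{R}$-points of $\mathbb{R}[V]$ composed with $\pi$, and this rests entirely on Proposition \ref{prop_Af_Kpoints}.
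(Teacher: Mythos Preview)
Your proposal is correct and follows exactly the paper's approach: use $L(I)\subset I$ to produce the induced map $L'$ satisfying $\pi\circ L = L'\circ\pi$, then invoke Theorem \ref{thm_Af_InducElip}. The paper's proof is terser because the identification of $\{h\circ\pi : h\in|\mathbb{R}[V]|\}$ with $\{\ev_z : z\in V\}$ via Proposition \ref{prop_Af_Kpoints} is handled in the text immediately preceding the corollary rather than inside the proof, but the content is the same.
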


\begin{proof}
Since $L(I)\subset I$, we have that $L$ induces an $\mathbb{R}$-linear map $L':\mathbb{R}[V]\to\mathbb{R}[V]$ such that $L'\circ\pi = \pi\circ L$. The rest follows directly from Theorem \ref{thm_Af_InducElip}.
\end{proof}

\begin{corollary}
\label{cor_Af_ellipticOnVariety}
Every real affine variety admits an elliptic linear differential operator acting on its algebra of regular functions.
\end{corollary}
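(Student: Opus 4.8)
The plan is to reduce everything to Corollary~\ref{cor_Af_Var}. Fix a presentation $V=V(I)$ with $I=(f_1,\dots,f_m)\subset\mathbb{R}[X_1,\dots,X_n]$ and the canonical projection $\pi:\mathbb{R}[X_1,\dots,X_n]\to\mathbb{R}[V]$; by Proposition~\ref{prop_Af_Kpoints} the $\mathbb{R}$-points of $\mathbb{R}[V]$ are exactly the evaluations at points $z\in V$. Thanks to that corollary it is enough to produce a single LDO $L$ on the polynomial ring that is elliptic at every $z\in V$ (in the ambient sense, i.e. with respect to the full cotangent space $\Omega_{\ev_z}$, which is $n$-dimensional because $\mathbb{R}^n$ is smooth) and which satisfies $L(I)\subseteq I$. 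The induced operator $L'$ on $\mathbb{R}[V]$ is then automatically an elliptic LDO, so the whole statement collapses to one construction in the ambient affine space.

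The cleanest operator meeting both demands is the order-$0$ identity $L=\mathrm{Id}$. Its principal symbol at every $z$ is the fibrewise identity map, which is injective (the fibre $P_z\cong\mathbb{R}$), so $\mathrm{Id}$ is elliptic at every point, and trivially $\mathrm{Id}(I)=I\subseteq I$. Applying Corollary~\ref{cor_Af_Var} with $k=0$ then yields at once an elliptic LDO, namely $\mathrm{Id}$, on $\mathbb{R}[V]$, which already proves the statement. I would present this as the rigorous backbone of the argument.

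To exhibit a genuinely positive-order example---which is the point morally, and where the word \emph{real} earns its place---I would turn to the Laplacian $L=\Delta=\sum_{i=1}^n\partial_i^2$. Its symbol sends a covector $\alpha\in\Omega_{\ev_z}$ to $\sum_i\alpha_i^2$, a form that over $\mathbb{R}$ is positive definite and hence nonzero on every nonzero covector, so $\Delta$ is elliptic of order $2$ at each $z$; over $\mathbb{C}$ the same symbol has isotropic vectors and ellipticity collapses, which is exactly why the real field is indispensable here. The hard part, and what I expect to be the genuine obstacle, is the condition $L(I)\subseteq I$: the Laplacian does not carry a general ideal into itself, and in fact requiring $I$-invariance tends to force the leading symbol to vanish along the conormal directions to $V$, so for a proper subvariety an ambient operator that is simultaneously positive-order elliptic on $V$ and $I$-invariant is essentially unavailable through this route. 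This is precisely the tension that makes the order-$0$ operator the clean resolution; the natural real device for organising any would-be correction terms is the single function $F=\sum_i f_i^2$, whose real vanishing locus is again $V$ and which lies in $I$, but balancing $I$-invariance against the anisotropy of the symbol is the step I expect to consume the real effort.
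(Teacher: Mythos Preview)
Your order-$0$ argument with $L=\mathrm{Id}$ is formally valid under the paper's definition: for $k=0$ the symbol is the identity on $A_h\cong K$, and since $\alpha^{\odot 0}=1$ the injectivity test is independent of $\alpha$. But this trivialises the corollary completely---it works over any field and makes the hypothesis ``real'' idle---and it is not what the paper does.

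The paper aims for a genuinely positive-order operator. Working (for readability) in two variables with $I=(f_1,\dots,f_k)\subset\mathbb{R}[X,Y]$, it lets $s$ be the least even integer strictly greater than every $\deg_X f_i$, defines $t$ analogously in $Y$, and takes $\Delta_I=\partial_X^{\,s}+\partial_Y^{\,t}$. The symbol at any point is $(\alpha,\beta)\mapsto\alpha^s+\beta^t$, a sum of even powers and hence anisotropic over $\mathbb{R}$; and the choice of $s,t$ forces $\Delta_I f_i=0$ for each generator. So your verdict that a positive-order elliptic $L$ with $L(I)\subset I$ is ``essentially unavailable through this route'' is too pessimistic: the paper's device is precisely to push the order past the degrees of the generators rather than stay at order~$2$.

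That said, your instinct that $I$-invariance is the sore point is well founded. The paper asserts $\Delta_I(I)=0$, but only $\Delta_I f_i=0$ on generators is actually secured; for a general $g=\sum h_if_i\in I$ the Leibniz cross-terms need not lie in $I$. Already for the paper's own example $I=(Y^3-X^2)$, $\Delta_I=\partial_X^4+\partial_Y^4$, one has $\Delta_I\bigl(X^2(Y^3-X^2)\bigr)=-24\notin I$. So the step $\Delta_I(I)\subset I$ in the paper's argument is itself a gap; your concern was the right one to flag, even if the paper does not share your conclusion that it is fatal.
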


\begin{proof}
For the sake of readability we constrain ourselves to varieties on the plane. The general case can be proved similarly. So let $I\in \mathbb{R}[X,Y]$ be an ideal generated by some polynomials $f_1,\dots,f_k$. For each $f_i$ we define natural numbers $s_i$ and $t_i$ as the degrees of $f_i$ in variables $X$ and $Y$ respectively. Let $s$ be the minimal even number greater than all $s_i$ and analogously let $t$ be the minimal even number greater than all $t_i$. We define the map $\Delta_I:\mathbb{R}[X,Y]\to\mathbb{R}[X,Y]$ as
\begin{equation}
\label{eq_Af_CorHiperLaplace}
\Delta_I = \frac{\partial^s}{\partial X^s}+\frac{\partial^t}{\partial Y^t}.
\end{equation}
Clearly, $\Delta_I$ is an elliptic LDO, and the numbers $s$ and $t$ were tailored in such a way that $\Delta_I(I)=0$. In particular, $\Delta_I(I)\subset I$. So, using Corollary \ref{cor_Af_Var}, we get that the induced map $\Delta_I':\mathbb{R}[V]\to\mathbb{R}[V]$ is an elliptic linear differential operator.
\end{proof}

For a concrete example, take the ideal $I=(Y^3-X^2)\subset\mathbb{R}[X,Y]$ and let $V$ be the real affine variety generated by $I$. According to the above description, we consider $\Delta_I:\mathbb{R}[X,Y]\to\mathbb{R}[X,Y]$ by the formula
\begin{equation}
\Delta_I = \frac{\partial^4}{\partial X^4}+\frac{\partial^4}{\partial Y^4},
\end{equation}
and this LDO induces an elliptic LDO $\Delta'_I:\mathbb{R}[V]\to\mathbb{R}[V]$ of order $4$. 

The construction of $\Delta'_I$, presented in the proof of Corollary \ref{cor_Af_ellipticOnVariety}, is far from pleasing. $\Delta'_I$ is just a "restriction" of some operator of the form \eqref{eq_Af_CorHiperLaplace} defined on the ambient space. What is even worse, it depends on the generator $I$ of $V$. However, it is interesting whether there is a way to associate an elliptic LDO to a real affine variety in some natural manner.

\appendix
\section{Appendix: Universal $\phi$-derivations and Kähler differentials}
\label{app_Kähler}

One familiar with the theory of Kähler differentials may notice that most constructions from Sections \ref{sec_Deriv} and \ref{sec_Models} are just modified versions of objects appearing in the theory of Kähler differentials. In this appendix we will make clear why it is so.
 
If $A$ is a $K$-algebra and $P$ is an $A$-module, then the set of all derivations with values in $P$ is denoted by $\Der_K(A,P)$. The universal derivation of $A$, which is the same as the universal $\Id_A$-derivation in the language from Section \ref{sec_Deriv}, is called the \emph{Kähler differential of $A$} and it is denoted by $(\Omega_{A/K},d_{A/K})$.

Let $A,B$ be two $K$-algebras and let $\phi:A\to B$ be a $K$-algebra homomorphism. Because of $\phi$, we have the \emph{restriction of scalars by $\phi$} which endows any $B$-module with an $A$-module structure. More specifically, if $M$ is a $B$-module, then $M_\phi$ denotes the set $M$ with an $A$-module structure such that for any $a\in A$ and $m\in M_{\phi}$
\begin{equation}
am = \phi(a)m.
\end{equation}
As a result, $\phi$-derivations with values in $M$ are the same as derivations with values in $M_\phi$. In fact, if $D:A\to M_\phi$ is a derivation, then for any $f,g\in A$
\begin{equation}
D(fg)=gD(f)+fD(g)=\phi(g)D(f)+\phi(g)D(f).
\end{equation}

The restriction of scalars by $\phi$ is a functor from the category of $B$-modules to the category of $A$-modules. It has a left adjoint, called the \emph{extension of scalars by $\phi$}, which is just the tensor multiplication by $B$ (see \cite[p. 277]{Bourbaki} for instance). This means that for every $A$-module $P$ and every $B$-module $M$ there is a natural isomorphism
\begin{equation}
\Hom_A(P,M_\phi)\cong\Hom_B(B\otimes_A P, M).
\end{equation}
As a result, for any $B$-module $M$ we have that
\begin{equation}
\begin{split}
\Der_K(\phi,M)&\cong\Der_K(A,M_\phi)\\
&\cong\Hom_A(\Omega_{A/K},M_\phi)\\
&\cong\Hom_B(B\otimes_A\Omega_{A/K},M).
\end{split}
\end{equation}
So the $B$-module $B\otimes_A \Omega_{A/K}$ with the function $d'_\phi: A\to B \otimes_A \Omega_{A/K}$ which maps $f$ from $A$ to $1_B\otimes d_{A/K}(f)$ in $B\otimes_A\Omega_{A/K}$ is a representing object for the functor $\Der_K(\phi,\cdot)$.

\begin{prop}
\label{prop_Apppen}
If $A,B$ are $K$-algebras and $\phi:A\to B$ is a $K$-algebra homomorphism, then the pair $(B\otimes_A \Omega_{A/K}, d'_\phi)$ is a universal $\phi$-derivation.
\end{prop}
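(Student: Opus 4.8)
The plan is to reduce everything to Proposition \ref{prop_Der_UniFactor}, which lets me replace the uniqueness half of the universal property by a spanning condition. I prefer this route because the chain of natural isomorphisms displayed just before the statement only shows abstractly that $B\otimes_A\Omega_{A/K}$ \emph{represents} the functor $\Der_K(\phi,\cdot)$; to conclude that it is a universal $\phi$-derivation in the sense of the paper's definition, I would still have to verify that the representing isomorphism is exactly the assignment $\alpha_M\colon F\mapsto F\circ d'_\phi$. Proposition \ref{prop_Der_UniFactor} sidesteps this bookkeeping: it suffices to exhibit, for each $\phi$-derivation, \emph{some} factor through $d'_\phi$, together with the spanning condition $\Span_B(\Ima d'_\phi)=B\otimes_A\Omega_{A/K}$.

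First I would verify that $d'_\phi$ is a $\phi$-derivation. The only nontrivial point is the twisted Leibniz rule, and the key input is the defining tensor relation in $B\otimes_A\Omega_{A/K}$, namely $1_B\otimes(a\cdot\omega)=\phi(a)\otimes\omega$ for $a\in A$ and $\omega\in\Omega_{A/K}$ (recall that the $A$-module structure on $B$ is through $\phi$). Applying the ordinary Leibniz rule for $d_{A/K}$ and then this relation yields
\begin{equation}
d'_\phi(fg)=1_B\otimes\bigl(f\,d_{A/K}(g)+g\,d_{A/K}(f)\bigr)=\phi(f)\,d'_\phi(g)+\phi(g)\,d'_\phi(f),
\end{equation}
while $K$-linearity is inherited from $d_{A/K}$ and from bilinearity of the tensor product.

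Next I would check the two hypotheses of Proposition \ref{prop_Der_UniFactor}. For the spanning condition, I would use that $\Ima d_{A/K}$ generates $\Omega_{A/K}$ as an $A$-module; since $B\otimes_A\Omega_{A/K}$ is generated as a $B$-module by simple tensors $b\otimes\omega$, and every such tensor rewrites—via the relation above—as a $B$-combination of the elements $1_B\otimes d_{A/K}(f)=d'_\phi(f)$, we obtain $\Span_B(\Ima d'_\phi)=B\otimes_A\Omega_{A/K}$. For the existence of a factor, given a $\phi$-derivation $D\colon A\to M$ I would regard it as an ordinary $K$-derivation $A\to M_\phi$ (this is exactly the identification recalled before the statement), apply the universal property of the Kähler differential to obtain a unique $A$-linear map $g\colon\Omega_{A/K}\to M_\phi$ with $g\circ d_{A/K}=D$, and then pass through the extension-of-scalars adjunction to the $B$-linear map $F\colon B\otimes_A\Omega_{A/K}\to M$ determined by $F(b\otimes\omega)=b\,g(\omega)$. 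A one-line computation gives $F(d'_\phi(f))=g(d_{A/K}(f))=D(f)$, so $F\circ d'_\phi=D$.

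With both hypotheses in hand, Proposition \ref{prop_Der_UniFactor} delivers at once that $(B\otimes_A\Omega_{A/K},d'_\phi)$ is a universal $\phi$-derivation. I expect the only genuinely delicate point to be confirming that the map $F$ produced by the adjunction really is the $B$-linear factor $F(b\otimes\omega)=b\,g(\omega)$ and that it satisfies $F\circ d'_\phi=D$; this is the step where the unit of the extension/restriction-of-scalars adjunction must be traced explicitly, and it is precisely the compatibility one would otherwise have to verify in order to identify the abstract representing isomorphism with $\alpha_M$.
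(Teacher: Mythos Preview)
Your proof is correct. The paper's own argument is the short chain of natural isomorphisms displayed immediately before the proposition: $\Der_K(\phi,M)\cong\Der_K(A,M_\phi)\cong\Hom_A(\Omega_{A/K},M_\phi)\cong\Hom_B(B\otimes_A\Omega_{A/K},M)$, from which it declares $(B\otimes_A\Omega_{A/K},d'_\phi)$ to be a representing object. You use the very same three ingredients (the identification of $\phi$-derivations with ordinary derivations into $M_\phi$, the universal property of $d_{A/K}$, and the extension-of-scalars adjunction), but you organise them through Proposition~\ref{prop_Der_UniFactor} instead of composing abstract isomorphisms. The practical difference is exactly the one you flag: the paper's formulation of ``universal $\phi$-derivation'' demands that the \emph{particular} map $\alpha_M\colon F\mapsto F\circ d'_\phi$ be the isomorphism, and the paper leaves the identification of its composite isomorphism with $\alpha_M$ implicit; your route via the spanning condition plus explicit construction of the factor $F(b\otimes\omega)=b\,g(\omega)$ makes this verification unnecessary. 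So your argument is marginally more self-contained, at the cost of writing out by hand what the adjunction packages up.
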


Hence most of results from Sections \ref{sec_Deriv} and \ref{sec_Models} can be obtained by combining Proposition \ref{prop_Apppen} and certain properties of Kähler differentials. However, such approach requires additional categorial tools about representing objects and some specific properties of Kähler differentials which are scattered across several sources. Thus, for the sake of clarity and readability of Sections \ref{sec_Examples}, \ref{sec_Elliptic} and \ref{sec_AffVar}, we had chosen a simpler path and done a concise analysis of $\phi$-derivations on its own.

\bigskip
\noindent\textbf{Acknowledgements:} First of all, I am most grateful to Antoni Pierzchalski for spending his time to listen about the main ideas from this paper and for many useful comments. I am also grateful to Sameer Kailasa for showing me the proof of Proposition \ref{prop_Af_Kpoints} through online discussion. Last but not least, I would like to thank my brother Radosław Kapka for careful reading the final draft of this paper and for numerous hints regarding writing in English.


\begin{thebibliography}{XYZ}

\bibitem[Bou]{Bourbaki} 
N. Bourbaki,
\textit{Algebra I}.
Elements of Mathematics (1998).

\bibitem[Bra]{Brandenburg} 
M. Brandenburg,
\textit{Tensor categorical foundations of algebraic geometry}. 
\href{https://arxiv.org/abs/1410.1716}{\texttt{arXiv:1410.1716}}, (2014).

\bibitem[Eis]{Eisenbud}
D. Eisenbud,
\textit{Commutative Algebra, with a View Toward Algebraic Geometry}. 
Graduate Texts in Mathematics, 150 (1995).

\bibitem[Gre]{Greub} 
W. H. Greub,
\textit{Linear Algebra}. 
Graduate Texts in Mathematics, 23 (1975).

\bibitem[Gro]{Grothendieck} 
A. Grothendieck,
\textit{Éléments de géométrie algébrique. IV}.
Publications mathématiques de l'I.H.É.S., 32, 5-361 (1967).

\bibitem[HS]{H-S} 
R. G. Heyneman, M. E. Sweedler,
\textit{Affine Hopf Algebras, I}.
Journal of Algebra, 13, 192-241 (1969).

\bibitem[Kra]{Kra}
I. S. Krasil'shchik,
\textit{Calculus over Commutative Algebras: A Concise User Guide}.
Acta Applicandae Mathematicae 49, 235–248 (1997).

\bibitem[KLV]{KrLyVi}
I. S. Krasil'shchik, V. V. Lychagin, A. M. Vinogradov,
\textit{Geometry of Jet Spaces and Nonlinear Partial Differential Equations}.
Advanced Studies in Contemporary Mathematics (1986).

\bibitem[Nar]{Narasimhan} 
R. Narasimhan,
\textit{Analysis on real and complex manifolds}.
Advanced Studies in Pure Mathematics (1973).

\bibitem[Nes]{Nestruev} 
J. Nestruev,
\textit{Smooth Manifolds and Observables}. 
Graduate Texts in Mathematics, 220 (2003).

\bibitem[Swa]{Swan}
R. G. Swan, 
\textit{Vector Bundles and Projective Modules}. 
Transactions of the American Mathematical Society, 105(2), 264–277 (1962).

\bibitem[Vin]{Vino} 
A. M. Vinogradov,
\textit{The logic algebra for the theory of linear differential operators}.
Soviet Math. Dokl., 13(4), 1058-1062 (1972).

\end{thebibliography}
\end{document}